\newtheorem{thrm}{Theorem}[section]
\newtheorem{lem}[thrm]{Lemma}
\newtheorem{prop}[thrm]{Proposition}
\newtheorem{cor}[thrm]{Corollary}
\theoremstyle{definition}
\newtheorem{definition}[thrm]{Definition}
\newtheorem{remark}[thrm]{Remark}
\newtheorem{example}[thrm]{Example}
\numberwithin{equation}{section}
\DeclareMathOperator{\sym}{Sym} \DeclareMathOperator{\Inv}{Inv}
\DeclareMathOperator{\cay}{Cay} \DeclareMathOperator{\aut}{Aut}
\DeclareMathOperator{\iso}{Iso}  \DeclareMathOperator{\fis}{Fis}
\DeclareMathOperator{\im}{Im}
\renewcommand{\subsection}{\@startsection{subsection}{2}{0mm}{-2mm}{-2mm}
{\bf\normalsize}}
\author{Fatemeh Raei Barandagh}
\address{Department of Mathematics,  K. N. Toosi University of Technology,
 P. O. Box: 16315-1618, Tehran, Iran.
}
\email{f.raei@dena.kntu.ac.ir}
\author{Amir Rahnamai Barghi}
\address{
School of Mathematics and Statistics, Carleton University, Ottawa, ON, K1S 5B6\\
}
\email{ rahnama@math.carleton.ca}
\keywords{association scheme, circular-arc graph,  lexicographic}
\subjclass{05E30, 05C75}
\begin{document}

\title[On  circular-arc graphs with association schemes]{On  circular-arc graphs with association schemes}

\begin{abstract}
In this paper, we give a characterization of the class of all
circular-arc graphs whose schemes are association. Moreover, all
association schemes which are the scheme of a circular-arc graph
are characterized, specially it is proved that they are Schurian.
\end{abstract}
\maketitle

\section{Introduction} \label{sect1}

In \cite{Wax},  B. Weisfeiler and A. Leman  have shown
that  a special matrix algebra is assigned to a given graph which
contains the adjacency matrix of the graph. In fact, this algebra
is the adjacency algebra of a scheme. The {\it scheme of a graph}
is the smallest scheme on the vertex set of the graph such that
the edge set of which is the union of some basic relations of the
scheme.
 The scheme of forests, interval graphs and  some special classes of graphs have been studied in~\cite{EPT}.
In this paper,  we study the scheme of a circular-arc graph which
is the intersection graph of a family  of arcs of a circle.

Circular-arc graphs received considerable attention since a
series of papers by Tucker in \cite{T1,T2,T3}, and  by
Dur$\acute{a}$n, Lin and McConnel  in \cite{D, LS, M}. Various
subclasses of circular-arc graphs have been also studied. Among
these are the proper circular-arc graphs, unit circular-arc
graphs, Helly circular-arc graphs and co-bipartite circular-arc
graphs. Several characterizations and recognition algorithms have
been formulated for circular-arc graphs and its subclasses.  But
in this paper, we correspond a finite or algebraic  description
for circular arc graphs instead of description according arcs of a
circle.

 Let
$n$ be a positive integer and let $S$ be a subset  of
$\mathbb{Z}_n$  such that $S=\{\pm 1,\ldots, \pm k\}$ for $0\leq
2k+1< n$. Then the Cayley graph $\cay(\mathbb{Z}_n,S)$ is a
circular-arc graph (see Sec. \ref{cmn graph}), and  we call it
{\it elementary circular-arc graph}. For $k=0$ it is empty graph
and for $k=1$ it is an undirected  cycle.

We say that $\Gamma$ is a {\it graph with association scheme} if
the scheme of $\Gamma$ is association.
  Our main results give a characterization
of    circular-arc graphs with  association schemes in the terms
of lexicographic product of graphs.

In graph theory,
  what we have called the {\it lexicographic product} or {\it composition} of graphs is also often called
the {\it wreath product}. The term wreath product comes from
group theory, and it is also defined in scheme theory.

In fact, analysis  duplicating the vertices of an elementary
circular-arc graph shows that  the lexicographic product  of an
elementary circular-arc graph and a complete graph is a
circular-arc graph.

The rest of this section is to state our results. The following
theorem provides a necessary and sufficient condition for a
circular-arc graph whose scheme is  association.

\begin{thrm}\label{graphtheorem}
A graph is a circular-arc graph  with association scheme if and
only if it is isomorphic to the lexicographic product of  an
elementary circular-arc graph and a complete graph.
\end{thrm}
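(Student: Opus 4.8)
The plan is to prove the two implications separately; the sufficiency is essentially a transitivity argument, while the necessity rests on a reduction by ``twins'' followed by a structural analysis of the reduced graph, which is where the real difficulty lies.

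\emph{Sufficiency.} Suppose $\Gamma\cong\Lambda[K_m]$ with $\Lambda=\cay(\mathbb{Z}_n,S)$ an elementary circular-arc graph and $K_m$ complete. By the remark preceding the statement, $\Gamma$ is a circular-arc graph, so only the association property must be checked. I would note that $\Lambda[K_m]$ is itself a Cayley graph, namely $\cay(\mathbb{Z}_n\times\mathbb{Z}_m,\,T)$ with $T=(S\times\mathbb{Z}_m)\cup(\{0\}\times(\mathbb{Z}_m\setminus\{0\}))$, since $(x_1,y_1)$ and $(x_2,y_2)$ are adjacent exactly when $x_1-x_2\in S$, or $x_1=x_2$ and $y_1\neq y_2$. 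In particular $\Gamma$ is vertex-transitive. Because the Weisfeiler--Leman construction is canonical, $\aut(\Gamma)$ preserves the scheme of $\Gamma$; a vertex-transitive automorphism group then collapses all diagonal pairs $(v,v)$ into one class, so the scheme is homogeneous, i.e. an association scheme.

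\emph{Necessity.} Now let $\Gamma$ be a circular-arc graph whose scheme $\mathcal S$ is homogeneous. Since the edge set $E$ is a union of basic relations, homogeneity makes $\Gamma$ regular, say of valency $d$. Introduce the true-twin relation $u\equiv v$, meaning $N[u]=N[v]$. Its classes are cliques, and for two distinct classes $\Gamma$ is either complete bipartite or empty between them; hence $\Gamma$ is a lexicographic product $\Lambda[\,\cdot\,]$ over the quotient graph $\Lambda$ on the set of twin-classes. The crucial use of homogeneity is that twinning is visible to $\mathcal S$: two adjacent vertices are true twins iff they have exactly $d-1$ common neighbours, and the number of common neighbours of an $R$-related pair is an intersection number depending only on the basic relation $R$. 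Thus the set of twin pairs, together with the diagonal, is a union of basic relations, so $\equiv$ is a scheme-closed equivalence; constancy of valencies then forces all twin-classes to have a common size $m$, giving $\Gamma\cong\Lambda[K_m]$. Contracting the classes yields the quotient scheme on $\Lambda$, again homogeneous, while by construction $\Lambda$ is twin-free, and contracting each bundle of parallel arcs to a single arc shows $\Lambda$ is still circular-arc.

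It therefore remains to prove the heart of the theorem: \emph{a twin-free circular-arc graph with homogeneous scheme is an elementary circular-arc graph.} I would fix a circular-arc model of $\Lambda$ and exploit the regularity of all the intersection numbers of its scheme, not merely the valency, to show that up to the circular symmetry the arcs may be taken of equal length and equally spaced; twin-freeness guarantees no two arcs coincide, so the endpoints occur in a single repeating pattern around the circle. This identifies the vertices with $\mathbb{Z}_n$ and the adjacency with $y-x\in\{\pm1,\dots,\pm k\}$, i.e. $\Lambda\cong\cay(\mathbb{Z}_n,\{\pm1,\dots,\pm k\})$. The main obstacle is precisely this step: ruling out irregular-looking but combinatorially regular arc arrangements requires combining the Helly/linear structure of arcs on a circle with the full intersection-number data of $\mathcal S$, rather than any single invariant. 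Once $\Lambda$ is identified as elementary, the decomposition $\Gamma\cong\Lambda[K_m]$ completes the characterization; moreover the resulting scheme is the wreath product of the dihedral orbital scheme of $\mathbb{Z}_n\rtimes\langle-1\rangle$ with a one-class scheme, and as a wreath product of orbital schemes it is Schurian.
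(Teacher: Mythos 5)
Your sufficiency argument is correct and takes a genuinely different route from the paper's: you realize $\Lambda[K_m]$ as the Cayley graph $\cay(\mathbb{Z}_n\times\mathbb{Z}_m,T)$, hence vertex-transitive, and since $\aut(\Gamma)$ fixes every basic relation of $\fis(\Gamma)$ setwise (the canonicity fact the paper cites from \cite{Wax}), transitivity forces $1_V$ to be a single basic relation, i.e.\ the scheme is association. The paper instead computes the scheme of the elementary factor explicitly (dihedral when $2k+2<n$ by Theorem~\ref{dihedral}, a wreath product of rank~$2$ schemes when $n=2k+2$ by Example~\ref{ex4}) and then applies Theorem~\ref{wreath}; your argument is shorter, though it yields less, since the explicit scheme is what the paper needs for Theorem~\ref{schemetheorem}. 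Your twin reduction in the necessity direction is also sound and is essentially the paper's Theorem~\ref{equ}, with a cleaner justification: in a $d$-regular graph, adjacency plus ``exactly $d-1$ common neighbours'' characterizes true twins, and by Lemma~\ref{rk} this condition is constant on basic relations, so the twin relation is scheme-closed, its classes have equal size, and $\Gamma\cong\Lambda[K_m]$ with $\Lambda$ twin-free and circular-arc.

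The genuine gap is in the step you yourself call the heart of the theorem: that a twin-free circular-arc graph whose scheme is association (in fact mere regularity suffices) is elementary. You do not prove it; you state a plan (``I would fix a circular-arc model \dots and exploit the regularity of all the intersection numbers \dots to show that the arcs may be taken of equal length and equally spaced'') and then concede that ruling out ``irregular-looking but combinatorially regular arc arrangements'' is the main obstacle. That obstacle is precisely what Sections~3 and~5 of the paper are devoted to overcoming, and nothing in your proposal overcomes it. Note also that the paper gets by with less than you expect to need: only the valency and twin-freeness are used, not the full intersection-number data. Concretely, Proposition~\ref{regular} shows that a non-empty regular twin-free circular-arc graph satisfies condition~\eqref{subset} (no closed neighbourhood is contained in another); Theorem~\ref{reduced} then constructs a \emph{reduced} arc-function $f:V\to A\mathbb{Z}_n$ of length $n=|V|$ in which no arc contains another and every point of $\mathbb{Z}_n$ is an endpoint of exactly two arcs; Corollary~\ref{correduced} forces all arcs to have equal cardinality; and Theorem~\ref{regularcag} reads off the isomorphism $v\mapsto i_v$ (sending each vertex to the initial endpoint of its arc) onto $C_{n,k}$. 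Until you carry out a normalization of this kind, the necessity direction remains unproven.
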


One can associate to any finite permutation group $G$ a scheme,
denoted by $\Inv(G)$. The scheme associated to the dihedral
group~$D_{2n}$ is called {\it dihedral} scheme.

The class of {\it forestal schemes} have been defined
         inductively  by means of  direct sums and wreath products  in~\cite{EPT}. The scheme of
cographs, trees and interval graphs are forestal (see
~\cite{EPT}).

A scheme is said to be {\it circular-arc scheme} if it is the
scheme of a circular-arc graph. In the following theorem we give a
characterization of  association circular-arc schemes.
\begin{thrm}\label{schemetheorem}
A  circular-arc scheme is association if and only if it is
isomorphic to the wreath product of a rank $2$ scheme and a
scheme which is either  forestal or dihedral.
\end{thrm}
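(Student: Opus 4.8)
The plan is to transport the graph-theoretic Theorem~\ref{graphtheorem} across the correspondence $\Gamma\mapsto(\text{scheme of }\Gamma)$, so that the scheme statement becomes a matter of computing the scheme of a clique blow-up and the scheme of an elementary circular-arc graph. Throughout, let $E=\cay(\mathbb{Z}_n,\{\pm 1,\ldots,\pm k\})$ with $0\le 2k+1<n$, let $K_m$ be the complete graph on $m$ vertices, let $\mathcal A$ be the rank $2$ scheme on $m$ points (the scheme of $K_m$), and let $\mathcal B$ be the scheme of $E$.

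First I would establish the behaviour of the scheme under the clique blow-up: the scheme of $E[K_m]$ is the wreath product $\mathcal A\wr\mathcal B$. On one hand $\mathcal A\wr\mathcal B$ is a scheme in which the edge set of $E[K_m]$ is a union of basic relations, so the scheme of $E[K_m]$, being the coarsest coherent refinement of the edge colouring, is coarser than $\mathcal A\wr\mathcal B$. On the other hand the fibres of the projection $E[K_m]\to E$ are precisely the classes of twins created by the blow-up, so the fibre partition is respected by the Weisfeiler--Leman closure; inside a fibre the distinct vertices fall in the single clique relation, while the relation between vertices of different fibres reproduces the $\mathcal B$-relation of the corresponding vertices of $E$. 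This forces the closure to be at least as fine as $\mathcal A\wr\mathcal B$, and the two coincide.

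Second, and this is the heart of the matter, I would show that $\mathcal B$ is either forestal or dihedral, by a dichotomy in $(n,k)$. If $k=0$ then $E$ is edgeless and $\mathcal B$ is the rank $2$ scheme; if $n=2k+2$ then the unique non-neighbour of each vertex is its antipode, so $E$ is the cocktail-party graph $K_{(k+1)\times 2}$. In both cases $E$ is a cograph, hence its scheme is forestal by the results quoted from~\cite{EPT}. In the remaining range $k\ge 1$ and $n\ge 2k+3$ I claim $\mathcal B=\Inv(D_{2n})$ is dihedral. Since $E$ is invariant under the rotations and a reflection of $\mathbb{Z}_n$ we have $D_{2n}\le\aut(E)$, so $\mathcal B$ is coarser than the distance scheme $\Inv(D_{2n})$ whose basic relations are $R_d=\{(x,y):x-y\equiv\pm d\}$; it therefore suffices to prove that the closure separates all the classes $R_d$. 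This I would do by the path-counting refinement: the number of common neighbours of a pair at circular distance $d$ varies with $d$ so that, together with the edge/non-edge distinction, it splits off $R_1,R_2,\dots$ one at a time, after which repeated products such as $R_1\cdot R_j$ propagate the separation up to $d=\lfloor n/2\rfloor$. Checking that these counts genuinely distinguish successive distances---where the wrap-around overlaps for $2k+3\le n\le 4k+2$ and the antipodal class for even $n$ require separate attention---is the main obstacle, and it is exactly here that the excluded cograph values $k=0$ and $n=2k+2$ fall out.

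With these two lemmas the theorem follows. For the forward direction, let $\mathcal C$ be an association circular-arc scheme; then $\mathcal C$ is the scheme of a circular-arc graph $\Gamma$ with association scheme, so Theorem~\ref{graphtheorem} gives $\Gamma\cong E[K_m]$, and the two lemmas identify $\mathcal C\cong\mathcal A\wr\mathcal B$ with $\mathcal A$ of rank $2$ and $\mathcal B$ forestal or dihedral (when $m=1$ the rank $2$ factor is trivial and $\mathcal C$ is itself forestal or dihedral). For the converse, if a circular-arc scheme is isomorphic to such a wreath product then it is association: the rank $2$, forestal and dihedral schemes are all association, and the wreath product of association schemes is again association. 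I expect both observations in this last direction---that forestal and dihedral schemes are association, and that the wreath product preserves the property---to be immediate from the definitions, so that the dihedral closure computation of the middle step remains the only substantial point.
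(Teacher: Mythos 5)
Your proposal is correct and takes essentially the same route as the paper: your two lemmas are precisely the paper's Theorem~\ref{wreath} (the scheme of the clique blow-up of a twin-free graph with association scheme is the wreath product of the corresponding schemes) and Example~\ref{ex4} together with Theorem~\ref{dihedral} (the scheme of $C_{n,k}$ is forestal in the degenerate cases $k=0$ and $n=2k+2$, and dihedral for $n>2k+2$), and your deduction of the theorem from them, including the converse via closure of association schemes under wreath products, matches the paper's proof. The one step you leave open --- verifying that common-neighbour counts separate the distance classes of $C_{n,k}$ --- is handled more cheaply in the paper: only the distance-$1$ relation is split off (adjacent pairs with exactly $2k-2$ common neighbours, via Lemma~\ref{rk}), after which $\fis(C_n)\le \fis(C_{n,k})$ follows by minimality, and the reverse inclusion comes from $D_{2n}=\aut(C_n)\le \aut(C_{n,k})$ together with the known fact $\fis(C_n)=\Inv(D_{2n})$, so no distance-by-distance propagation is needed.
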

\begin{remark}\label{remark}
Forestal scheme  in Theorem~\ref{schemetheorem} occurs as a
wreath product of the scheme of at most on 2 points and a rank 2
scheme.  For example: the scheme of a  disjoint union of copies of
a complete graph, namely $mK_n$, or the scheme of the
lexicographic product of a complete graph and a complete graph
without perfect matching.
\end{remark}

A scheme $\mathcal{X}$  is said to be  {\it Schurian} if
$\mathcal{X}=\Inv(G)$ for some permutation group $G$, see
\cite{Zi1}. In fact, any rank 2 scheme and any dihedral scheme
are Schurian. Moreover,  the wreath product of two Schurian
schemes is Schurian, see~\cite{Zi1}. Thus  we have the following
corollary:
\begin{cor}
Any  association circular-arc scheme is Schurian.
\end{cor}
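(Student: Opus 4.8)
The plan is to derive the corollary directly from Theorem~\ref{schemetheorem} together with the stated closure of Schurian schemes under wreath products. First I would invoke Theorem~\ref{schemetheorem} to write an arbitrary association circular-arc scheme $\mathcal{X}$ as a wreath product $\mathcal{A}\wr\mathcal{B}$, where $\mathcal{A}$ has rank $2$ and $\mathcal{B}$ is either forestal or dihedral. Since the wreath product of two Schurian schemes is Schurian, it then suffices to show that each of the two factors is Schurian.

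The factor $\mathcal{A}$ is Schurian because every rank $2$ scheme is of the form $\Inv(\sym(n))$, as already noted in the paragraph preceding the corollary. If $\mathcal{B}$ is dihedral, then by definition $\mathcal{B}=\Inv(D_{2n})$, so it too is Schurian, and the conclusion is immediate in this case: $\mathcal{X}$ is a wreath product of two Schurian schemes.

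The remaining case, in which $\mathcal{B}$ is forestal, is where I expect the only genuine point to lie, and here I would appeal to Remark~\ref{remark}. That remark identifies the forestal schemes occurring in Theorem~\ref{schemetheorem} not as arbitrary forestal schemes but as wreath products of a scheme on at most $2$ points with a rank $2$ scheme. A scheme on at most two points is either the trivial scheme or a rank $2$ scheme, and in either case it is Schurian; the rank $2$ factor is Schurian by the argument above. Hence $\mathcal{B}$ is itself a wreath product of two Schurian schemes, so $\mathcal{B}$ is Schurian.

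Combining the two cases, in every instance $\mathcal{X}=\mathcal{A}\wr\mathcal{B}$ is a wreath product of Schurian schemes, and one final application of the closure property shows that $\mathcal{X}$ is Schurian. The main obstacle is thus not any hard computation but the bookkeeping step of confirming, via Remark~\ref{remark}, that the forestal schemes which actually arise are precisely the ones expressible as wreath products with Schurian factors; once that is in place the result follows purely formally from the permanence of Schurianity under wreath products.
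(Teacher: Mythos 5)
Your proposal is correct and follows essentially the same route as the paper: the paper deduces the corollary from Theorem~\ref{schemetheorem} using the facts that rank $2$ and dihedral schemes are Schurian and that Schurian schemes are closed under wreath products, with the forestal case covered exactly as you do via Remark~\ref{remark}. Your write-up merely makes explicit the bookkeeping (the forestal factor being a wreath product of a Schurian scheme on at most $2$ points with a rank $2$ scheme) that the paper leaves implicit in that remark.
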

It is known  that the automorphism group of each graph is equal to
automorphism group of its scheme, see~\cite{Wax}. Moreover, it is
well-known that the automorphism group of wreath product of two
schemes is equal to  the wreath product of their automorphism
groups. Denote by~$\sym(n)$ the symmetric group   on $n$ points,
and denote by $G\wr H$ the wreath product of two groups $G$ and
$H$. The following corollary is an immediate consequence of
Theorems~\ref{graphtheorem} and \ref{schemetheorem}.
\begin{cor}
Let $\Gamma$ be a  circular-arc graph with association scheme on
$n$ vertices. Then there is an even integer $k$, $k | n$,  such
that $\aut({\Gamma})$ is isomorphic to  $\sym(\frac{n}{k}) \wr G$,
where $G$ is $\sym(k)$ or $\sym(2)\wr \sym(\frac{k}{2})$ or
$D_{2k}$.
\end{cor}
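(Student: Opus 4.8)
The plan is to combine the structural description from Theorem~\ref{graphtheorem} with the scheme-theoretic decomposition of Theorem~\ref{schemetheorem}, and then read off the automorphism group. First I would invoke Theorem~\ref{graphtheorem} to write $\Gamma\cong E[K_t]$, the lexicographic product of an elementary circular-arc graph $E=\cay(\mathbb{Z}_m,\{\pm1,\ldots,\pm\ell\})$ on $m$ vertices with a complete graph $K_t$, so that $n=mt$. Since the lexicographic product blows up each vertex of $E$ into a clique $K_t$, the scheme of $\Gamma$ is the wreath product of the rank~$2$ scheme on $t$ points (the scheme of $K_t$) with the scheme of $E$; this is exactly the decomposition furnished by Theorem~\ref{schemetheorem}, with the scheme of $E$ playing the role of the forestal or dihedral factor. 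Using the two facts quoted before the corollary, that the automorphism group of a graph equals that of its scheme, and that the automorphism group of a wreath product of schemes is the wreath product of the automorphism groups, I obtain $\aut(\Gamma)=\sym(t)\wr\aut(E)$, where the top factor $\aut(E)$ permutes the $m=n/t$ cliques.

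Next I would determine $\aut(E)$ from the classification in Theorem~\ref{schemetheorem} together with Remark~\ref{remark}. The scheme of $E$ is forestal or dihedral, and translating back to graphs leaves exactly three possibilities for the elementary circular-arc graph $E$: the totally disconnected graph (equivalently, a graph with the rank~$2$ scheme), whose automorphism group is $\sym(m)$; the cocktail-party graph $K_m$ minus a perfect matching, which exists only for even $m$ and whose automorphism group is $\sym(2)\wr\sym(m/2)$; and a proper power of a cycle, whose scheme is dihedral and whose automorphism group is $D_{2m}$. Putting $k=m$ and $n/k=t$, the identity $\aut(\Gamma)=\sym(t)\wr\aut(E)$ becomes $\aut(\Gamma)\cong\sym(n/k)\wr G$ with $G$ equal to $\sym(k)$, to $\sym(2)\wr\sym(k/2)$, or to $D_{2k}$ respectively, which is the asserted trichotomy.

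The main obstacle is the careful bookkeeping behind the last two conditions, namely that $k\mid n$ and that $k$ may be taken even. Divisibility is immediate because $k=m$ and $n=mt$. For the parity, the cocktail-party case forces $m$, and hence $k$, to be even; for the two remaining cases one must verify directly, from the explicit group-theoretic shape of $\aut(E)$, that the decomposition can be arranged with an even number $k$ of blocks, using the precise list of elementary circular-arc graphs whose schemes are association. A secondary point requiring care is the justification that $\aut(E[K_t])$ is \emph{exactly} the wreath product $\sym(t)\wr\aut(E)$, with no additional automorphisms: this is what the identity $\aut(\text{graph})=\aut(\text{scheme})$ buys us, provided one checks that the scheme of $\Gamma$ is genuinely the wreath product having the rank~$2$ scheme as base factor and the scheme of $E$ as top factor, in agreement with Theorem~\ref{schemetheorem}. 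Getting these parity and factor-ordering details right is the crux; the rest is a direct substitution.
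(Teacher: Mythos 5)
Your derivation of the group-theoretic form follows exactly the route the paper intends: the paper gives no separate proof of this corollary beyond declaring it an immediate consequence of Theorems~\ref{graphtheorem} and~\ref{schemetheorem} together with the two facts quoted just before it (that $\aut$ of a graph equals $\aut$ of its scheme, and that the automorphism group of a wreath product of schemes is the wreath product of the automorphism groups). Your steps --- writing $\Gamma\cong E[K_t]$ with $E=C_{m,\ell}$ twin-free, invoking the second part of Theorem~\ref{wreath} to identify $\fis(\Gamma)$ with $\fis(K_t)\wr\fis(E)$, passing to automorphism groups to get $\aut(\Gamma)\cong\sym(t)\wr\aut(E)$, and the trichotomy $\aut(E)\in\{\sym(m),\ \sym(2)\wr\sym(m/2),\ D_{2m}\}$ according as $E$ is empty, is $K_m$ minus a perfect matching ($m=2\ell+2$), or is $C_{m,\ell}$ with $2\ell+2<m$ --- are correct and are precisely what the paper's citation of its two theorems amounts to.

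The gap is the step you yourself flagged and deferred: that $k$ "may be taken even." You write that this must be verified from the explicit shape of $\aut(E)$, but no such verification is possible, because the parity assertion is false. Evenness of $k=m$ is forced only in the cocktail-party case; in the other two cases $m$ can be odd, and then no even $k$ exists. Concretely, $\Gamma=C_{5,1}=C_5$ is a circular-arc graph whose scheme is dihedral by Theorem~\ref{dihedral}, hence association; here $\aut(\Gamma)=D_{10}$ and $n=5$ has no even divisor whatsoever. Likewise $\Gamma=C_{3,0}[K_3]=3K_3$ has association scheme by Theorem~\ref{graphtheorem} itself, $\aut(\Gamma)\cong\sym(3)\wr\sym(3)$, and $n=9$ again has no even divisor. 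Note also that the decomposition $\Gamma\cong E[K_t]$ with $E$ twin-free is unique ($E\cong\Gamma_{V/E}$ where $E$ is the twin relation of Theorem~\ref{equ}), so there is no freedom to rearrange the blocks to force parity. Your instinct that the parity condition was the crux is exactly right, but the resolution is that the word \emph{even} in the corollary is an error in the paper: the statement, and your argument for it, are correct once "even" is deleted, with evenness holding precisely in the case $G=\sym(2)\wr\sym(\frac{k}{2})$.
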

\medskip

This paper is organized as follows. In Section 2, we present some
preliminaries on graph theory and  scheme theory. In Section 3,
we first remind the concept of circular-arc graphs and then we
introduce arc-function and reduced arc-function of a circular-arc
graph. Moreover,  we characterize non-empty regular circular-arc
graphs without twins.
 Section 4 contains  relationship  between lexicographic product of graphs and wreath product of their schemes.
    In Section 5, we define elementary circular-arc
 graphs. Then, we characterize the scheme of graphs which belong to this class.
 Finally, in Section 6  we give the proof of Theorem~\ref{graphtheorem} and
 Theorem~\ref{schemetheorem}.

\medskip
{\bf Notation.} Throughout the paper, $V$ denotes a finite set.
The diagonal of the Cartesian product $V^2$ is denoted by~$1_V$.
\medskip

For $r,s\subset V^2$ and $X,Y \subset V$ we have the following
notations:

 $$r^*=\{(u,v)\in V^2:\ (v,u)\in r\},$$ $$r_{X,Y}=r\cap(X\times Y) ~,
 ~r_X=r_{X,X},$$
 $$r\cdot s=\{(v,u)\in V^2:\ (v,w)\in r,\
(w,u)\in s ~{\text{ for some}}~ w\in V\},$$
$$
r\otimes s=\{((v_1,v_2),(u_1,u_2))\in V^2\times V^2:\
(v_1,u_1)\in r ~{\text{ and}} ~  (v_2,u_2)\in s\},
$$

  Also for any $v \in V$, set $vr=\{u\in V :\
(v,u)\in r \}$ and $n_r(v)=|vr|$.
\medskip

For $S\in 2^{V^2}$ denote by $S^\cup$ the set of all unions of
the elements of $S$, and set $S^*=\{s^*:\ s\in S\}$ and $v
S=\cup_{s\in S}v s$. For $T\in 2^{V^2}$ set $$S\cdot T=\{s\cdot
t:\ s\in S,\, t\in T\}.$$

 For an
integer $n$, let $\mathbb{Z}_n$ be the ring of integer numbers
modulo $n$. Set
$$
A\mathbb{Z}_n:=\{\{i, i+1,\ldots, i+k\}: i,k \in \mathbb{Z}_n
~{\text {and}}~ k \neq n-1 \}.
$$
For each set $\{i, i+1,\ldots, i+k\}$, the points $i$ and $i+k$
are called the end-points of the set.

\medskip

\section{Preliminaries }
\subsection{Graphs.}\label{graph} All terminologies and definitions about graph theory have been adapted from~\cite{bondy}.
 In this paper, we consider finite and undirected  graphs which contains  no loops and multiple edges.
We denote complete graph on $n$ vertices by $K_n$, and an
undirected cycle on $n$ vertices by~$C_n$.

\medskip
Let $\Gamma=(V,R)$ be a graph with vertex set $V$ and edge set
$R$. Let $E$ be an equivalence relation on  $V$, then
$\Gamma_{V/E}$ is a graph with vertex set $V/E$ in which
distinct  vertices $X$ and $Y$ are adjacent if and only if at
least one vertex  in $X $ is adjacent
 in $\Gamma$ with some vertex in $Y$.
For every subset $X$ of $V$, the graph $\Gamma_X$ is the subgraph
of $\Gamma$ induced by~$X$.

\medskip
Let $\Gamma_i$ be a graph on $V_i$, for $i=1,2$. The graphs
$\Gamma_1$ and $\Gamma_2$ are {\it isomorphic } if there is a
bijection $f: V_1 \to V_2$, such that two vertices $u$ and $v$ in
$V_1$ are adjacent in $\Gamma_1$ if and only if their images
$f(u)$ and $f(v)$ are adjacent in $\Gamma_2$. Such a bijection is
called an {\it isomorphism} between $\Gamma_1$ and $\Gamma_2$.
The set of all isomorphism between $\Gamma_1$ and $\Gamma_2$ is
denoted by $\iso(\Gamma_1,\Gamma_2)$. An isomorphism from a graph
 to itself is called an {\it automorphism}.
The set of all automorphisms of a graph $\Gamma$ is the {\it
automorphism group} of $\Gamma$, and  denoted by $\aut(\Gamma)$.

\medskip
The {\it lexicographic product} or {\it composition} of  graphs
$\Gamma_1$ and $\Gamma_2$ is the  graph $\Gamma_1[\Gamma_2]$ with
vertex set $V_1\times V_2$ in which $(u_1,u_2)$ is adjacent to
$(v_1,v_2)$ if and only if either $u_1$ and $v_1$ are adjacent in
$\Gamma_1$ or $u_1=v_1$ and also   $u_2$ and $v_2$ are adjacent in
$\Gamma_2$.

\medskip
Let $\Gamma=(V,R)$ be a graph.
 Two vertices $u,v \in V$ are {\it twins} if  $u$ and $v$ are adjacent in $\Gamma$  and  $vR \backslash\{u\}=uR
\backslash\{v \}$, where  the set of neighbors of a vertex $v$ in
the graph $\Gamma$ is denoted by $vR$.

\medskip

\subsection{ Schemes.}\label{scheme} In this part  all terminologies and
notations are based on~\cite{EP}.

\begin{definition} A pair $\mathcal{X}=(V,S)$, where $V$ is a
finite set and $S$ a partition of~$V^2$, is called a {\it scheme}
on $V$ if $1_V\in S^\cup$, $S^*=S$, and for any $r,s,t\in S$ the
number
$$
c_{rs}^t:=|v r\cap us^*|
$$
does not depend on the choice of $(v,u)\in t$. The scheme
$\mathcal{X}$ is called {\it association} if $1_V\in S$.
\end{definition}
 The elements of
$V$, $S$, $S^\cup$ and the numbers $c_{rs}^t$ are called the {\it
points}, the {\it basic relations}, the {\it relations} and the
{\it intersection numbers} of the scheme~$\mathcal{X}$,
respectively.
 The numbers
$|V|$ and $|S|$ are called the {\it degree} and the {\it rank} of
$\mathcal{X}$. A unique basic relation containing a pair
$(v,u)\in V^2$ is denoted by $r_{\mathcal{X}}(v,u)$ or $r(v,u)$.

\medskip
An equivalence relation on a subset of $V$ belonging to $S^\cup$
is called an equivalence relation of the scheme $\mathcal{X}$.
 Any scheme has {\it trivial} equivalence relations:
$1_V$ and $V^2$. Let $e\in S^\cup$ be an equivalence relation.
For a given $X\in V/e$  the {\it restriction} of the
scheme~$\mathcal{X}$ to~$X$  is the scheme
$$
\mathcal{X}_X=(X,S_X),
$$
where $S_X$ is  the set of all non-empty relations $r_X$ with
$r\in S$.  The {\it quotient} of the scheme~$\mathcal{X}$
modulo~$e$ is defined to be the scheme
$$
\mathcal{X}_{V/e}=(V/e,S_{V/e}),
$$
where $S_{V/e}$ is the set of all non-empty relations of the form
$\{(X,Y):\  s_{X,Y}\neq \emptyset\}$, with $s\in S$.

Two schemes $\mathcal{X}_1$ and $\mathcal{X}_2$ are called {\it
isomorphic} if there exists a bijection between their point sets
in such a way  that induces a bijection between their sets of
basic relations. Such a bijection is called an {\it isomorphism}
between $\mathcal{X}_1$ and $\mathcal{X}_2$. The set of all
isomorphism between $\mathcal{X}_1$ and $\mathcal{X}_2$ is
denoted by $\iso(\mathcal{X}_1,\mathcal{X}_2)$. The group of all
isomorphisms of a scheme $\mathcal{X}$ to itself contains a normal
subgroup
$$
\aut(\mathcal{X})=\{f\in \sym(V): s^f=s, s\in S\}
$$
called the {\it automorphism group} of $\mathcal{X}$ where
$s^f=\{(u^f,v^f):(u,v)\in s\}$.
\medskip

The {\it wreath product} $\mathcal{X}_1\wr \mathcal{X}_2$ of two
schemes $\mathcal{X}_1=(V_1,S_1)$ and $\mathcal{X}_2=(V_2,S_2)$
is a scheme on  $V_1\otimes V_2$ with the following  basic
relations
$$
V^2_1\otimes r,\   ~{\text{for}}~ r\in S_2\backslash 1_{V_2}
~~{\text{and}}~~ s\otimes 1_{V_2} ,\  ~{\text{for}}~ s\in S_1.
$$

\medskip

\subsection{ The scheme of a graph.}\label{s of g}
 There is a natural partial order\, $"\leq"
$\, on the set of all schemes on  $V$. Namely, given two schemes
$\mathcal{X}=(V,S)$ and $\mathcal{X}'=(V,S')$ we set
$$
\mathcal{X}\leq \mathcal{X}' \ \Leftrightarrow\ S^\cup\subseteq
(S')^\cup.
$$
In this case $\mathcal{X}$ is called a {\it fusion} (subscheme)
of  $\mathcal{X}'$ and $\mathcal{X}'$ is called a  {\it fission}
(extension) of $\mathcal{X}$. The minimal and maximal elements
with respect to $"\leq"$ are the {\it trivial} and the {\it
complete} schemes on $V$ respectively: the basic relations of the
former one are the reflexive relation $1_V$ and (if $|V|>1$) its
complement in $V^2$, whereas the relations of the later one are
all binary relations on $V$.
 \medskip

Let $R$ be an arbitrary relation on the set  $V$. Denote   by
$\fis(R)$ the smallest scheme   with respect to $"\leq"$ such
that  $R$ is a union of its basic relations.
 Let
$\Gamma=(V,R)$ be a graph with vertex set $V$ and edge set $R$. By
the scheme of $\Gamma$ we mean $\fis(\Gamma)=\fis(R)$. For
example, if $\Gamma$ is a complete graph or empty graph with at
least 2 vertices, then its scheme is the trivial scheme of rank
2. One can check  that   if  $\fis(\Gamma)$ is an association
scheme, then $\Gamma$ is a regular graph.

In general, it is quite difficult to find the scheme of an
arbitrary graph. In~\cite{EPT},  the scheme of a graph has been
studied for some classes of graphs.

\section{Circular-arc graphs}
From \cite{bondy}, for a given  family $ \mathcal{F}$ of subsets
of $V$, one may associate an {\it intersection graph}. This is the
graph whose vertex set is $\mathcal{F}$, two different sets in
$\mathcal{F}$ being adjacent if their intersection is non-empty.
 Circular-arc graph is the
intersection graph of a family  of arcs of a circle.
\begin{lem}\label{arcfun}
Let   $\Gamma$  be a graph on  $V$ with   $n$ vertices. Then
$\Gamma$ is a circular-arc graph if and only if there exists a
function $f:V \to A\mathbb{Z}_{m}$, for some $m$, such that
$\Gamma$  is the intersection graph of the family $\im(f)=\{f(v):
v\in V\}$. Moreover, this function can be chosen such that
\begin{enumerate}
   \item  [(1)] any element of $\mathbb{Z}_{m}$ is the end-point of at least one set
   in
  $\im(f)$,
  \item [(2)] each set  in $\im(f)$ contains at least two
  elements.
\end{enumerate}
\end{lem}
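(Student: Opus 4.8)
The plan is to prove both implications by translating between a geometric arc representation and the discrete model $A\mathbb{Z}_m$, with the whole argument resting on one elementary fact which I would isolate first as a workhorse: if $Z$ is a finite set of points on a circle and $A,B$ are proper closed arcs all of whose endpoints lie in $Z$, then $A\cap B\neq\emptyset$ if and only if $A$ and $B$ contain a common point of $Z$. Indeed, any intersection point lying strictly inside a gap between two consecutive points of $Z$ forces each of $A,B$ to swallow that entire gap, hence to contain both of its bounding points of $Z$; the converse is trivial. I would use this equivalence in both directions.

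For sufficiency, given $f\colon V\to A\mathbb{Z}_m$ I place $m$ points $z_0,\dots,z_{m-1}$ in cyclic order on a circle and assign to $v$ the closed arc $A_v$ running from $z_i$ to $z_{i+k}$ when $f(v)=\{i,\dots,i+k\}$. Since $k\neq m-1$, each $A_v$ is a proper arc, its endpoints lie in $Z=\{z_0,\dots,z_{m-1}\}$, and the lattice points it contains are exactly the $z_j$ with $j\in f(v)$. The workhorse then gives $A_u\cap A_v\neq\emptyset\Leftrightarrow f(u)\cap f(v)\neq\emptyset$, so the intersection graph of $\{A_v\}$ is $\Gamma$ and $\Gamma$ is a circular-arc graph.

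For necessity I start from a family $\{A_v\}$ realizing $\Gamma$. After a harmless perturbation I may assume every $A_v$ is nondegenerate and proper, since enlarging or shrinking arcs by a sufficiently small amount changes no adjacency (only finitely many coincidences are critical). Taking $Z$ to be the finite set of all arc endpoints, listed cyclically as $\mathbb{Z}_m$, and setting $f(v)=\{j : z_j\in A_v\}$, each $f(v)$ is a contiguous block whose two end indices are the endpoints of $A_v$, and the workhorse again shows that $f$ realizes $\Gamma$. This construction gives (1) and (2) almost for free: every lattice point is by definition an endpoint of some $A_v$, giving (1); and a nondegenerate $A_v$ has two distinct endpoints in $Z$, so $|f(v)|\geq 2$, giving (2).

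The one point that is not automatic — and where I expect the real work — is ensuring each $f(v)$ is a proper block, i.e. that no arc swallows every point of $Z$ and so wraps to all of $\mathbb{Z}_m$ (forbidden, as that is the case $k=m-1$). This occurs precisely when the gap behind some $A_v$ contains no endpoint, for instance when one arc contains all the others. To repair such configurations I would use two adjacency-preserving moves on the discrete model. Duplication replaces $\mathbb{Z}_m$ by $\mathbb{Z}_{2m}$ via $i\mapsto\{2i,2i+1\}$ and $\{i,\dots,i+k\}\mapsto\{2i,\dots,2i+2k+1\}$; it keeps every block proper, makes every block of even size $\geq 2$, and preserves intersections exactly. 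Deletion removes any lattice point that is an endpoint of no block: such a point is interior wherever it occurs, so two blocks sharing it also share a neighbour, and intersections are unchanged. Duplication secures (2) and, by forcing every block to have even size, rules out a block of size $m-1$, which is exactly the configuration under which a single deletion could create a full circle; deletion then drives the model toward (1). The technical heart is to order these moves and verify termination so that (1), (2) and properness hold at once, and to dispose of the genuinely degenerate small cases (for example a single vertex); this bookkeeping, rather than any new idea, is the main obstacle.
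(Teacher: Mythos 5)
Your workhorse lemma, your sufficiency argument, and your initial discrete model for necessity are all correct and essentially coincide with the paper's proof: the paper also takes the lattice $Z$ (there called $A$) to be the set of all arc end-points after making them distinct, sets $f(v)$ to be the indices of the lattice points lying on $C_v$, and uses exactly your workhorse equivalence (this is its ``it is easily seen that $C_u \cap C_v$ is not empty if and only if $i_v\in A_u$ or $j_v\in A_u$ or $i_u,j_u\in A_v$''), after which (1) and (2) come for free just as you say. You also correctly isolated the one step that is not automatic: if some arc contains \emph{every} point of $Z$, then $f(v)=\mathbb{Z}_m$, which is excluded from $A\mathbb{Z}_m$. (The paper never addresses this case at all; its assertion that $i_v,j_v$ are ``uniquely determined'' tacitly presumes each $C_v\cap A$ is a proper subset. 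So on this point you saw more than the paper records.)

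The genuine gap is that your repair mechanism cannot repair the defect you identified: a full block is a fixed point of both of your moves. Duplication sends a block of size $m$ in $\mathbb{Z}_m$ to one of size $2m$ in $\mathbb{Z}_{2m}$, and deletion only removes lattice points, so a block containing all points before a deletion contains all remaining points after it. The property ``some block equals the whole cyclic set'' is therefore preserved by any sequence of your moves, and no ordering or termination argument — the ``bookkeeping'' you defer — can succeed; moreover, in the opposite case where no arc contains all of $Z$, your initial model already satisfies (1), (2) and properness, so the moves are never needed. In short, duplication and deletion solve only problems that do not arise and miss the one that does. A correct repair must modify the arcs (or blocks) themselves. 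For instance, geometrically: if $A_v\supseteq Z$, retract its clockwise end-point $q$ to just past the nearest foreign end-point $z$; the discarded piece contains no end-points other than $z$ and $q$, hence contains no whole arc, and every other arc either lies inside $A_v$ or wraps through $A_v$'s gap and so still meets the retracted arc at its other end-point — but this shrink can turn an arc that previously missed only $q$ into a new offender, so one must iterate and prove termination. Alternatively, peel off the full-block vertices (they are universal vertices), represent the remaining graph, and re-insert each universal vertex as the block $\mathbb{Z}_m\setminus\{p\}$. Either way this is a real argument, not bookkeeping. Finally, the single-vertex graph you set aside is not a disposable degeneracy but an actual counterexample to the ``moreover'' clause: for one block, (2) together with properness forces $m\geq 3$, while (1) forces $m\leq 2$; so the statement implicitly needs at least two vertices, a caveat neither you nor the paper makes explicit.
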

\begin{proof}
To prove sufficient part let $\Gamma$ be   a circular-arc graph.
Then by the  definition it is the intersection graph of some arcs
of a circle $C$. Without loss of generality, we may assume that
the end-points of any of these arcs are distinct. Let $m$ be the
number of these end-points and let $A=\{a_0, a_1,\ldots,
a_{m-1}\}$ be  the set of all of them. It is clear that $m\leq
2n$. Here the indices of the points $a_i$ are the elements of
$\mathbb{Z}_m$; they are chosen in such a way that the point $a_i$
precedes the point $a_{i+1}$ in the clockwise order of the circle
$C$. Then for any vertex $v\in V$ there exist uniquely determined
elements $i_v, j_v \in \mathbb{Z}_m$ such that
$$
A_v:=C_v \cap A= \{a_{i_v},a_{i_v+1}\ldots, a_{j_v}\},
$$
where $C_v$ is  a subset of $C$ which is  the arc corresponding to
$v$. Moreover, it is easily seen that $C_u \cap C_v$ is not
empty  if and only if $i_v\in A_u$ or $j_v\in A_u$ or $i_u,j_u\in
A_v$. Therefore, the vertices $u$ and $v$ are adjacent if and only
if the set $A_u \cap A_v$ is not empty. Now define a  function
$f:V \to A\mathbb{Z}_{m}$ by
$$
f(v)=\{i_v, i_v+1, \ldots, j_v\}.
$$
Then $\Gamma$  is the intersection graph of the family $\im(f)$.
Moreover, statements (1) and (2) immediately follows from the
definition of $f$.

Conversely,  let  $m\leq 2n$ and  $f:V \rightarrow
A\mathbb{Z}_{m}$ be a function such that $\Gamma$  is the
intersection graph of  $\im(f)$. Consider a circle $C$ and choose
$m$ distinct  points on it. We may label these points by the
elements of $\mathbb{Z}_m$ such that these points appears in $C$
in clockwise order.  Since $\im(f)\subset A\mathbb{Z}_m$  for each
vertex $v\in V$ there exist $i_v,j_v\in \mathbb{Z}_m$ such that
$f(v)= \{i_v, i_v+1, \ldots, j_v\}$.  We correspond an arc $C_v
\subset C$,  from $i_v$ to $j_v$ in clockwise order to the vertex
$v$. It is clear that the set $f(u)\cap f(v)$ is not empty if and
only if $C_u \cap C_v$ is not empty. It follows that $\Gamma$ is
the intersection graph of the set  $\{C_v : v\in V\}$. So it is a
circular-arc graph. This completes the proof of the lemma. \hfill
$\Box$
\end{proof}

\vspace{5mm}

\medskip

The function $f:V\to A\mathbb{Z}_m$  satisfying statements~(1)
and~(2) and conditions of Lemma~\ref{arcfun}    is called the
{\it arc-function} of the graph~$\Gamma$ and the number $m$ is
called the {\it length} of $f$.\\
\begin{thrm}\label{reduced}
Let $\Gamma=(V,R)$ be a non-empty  circular-arc graph on $n$
vertices. Suppose that for any two vertices $u$ and $v$ in $V$ we
have:
\begin{equation}\label{subset}
v\in uR ~~~ \Rightarrow~~~ uR \not\subset  \{v\}\cup vR.
\end{equation}
 Then there exists
an arc-function $f$ of $\Gamma$ such that the following statements
hold:
\begin{enumerate}
\item [(i)] no set of $\im(f)$ is a subset of another set of $\im(f)$,
   \item  [(ii)] the length of ~$f$ is equal to $n$,
  \item [(iii)] any element  $i \in \mathbb{Z}_{n}$ is the end-point of exactly two sets in
  $\im(f)$.
\end{enumerate}
\end{thrm}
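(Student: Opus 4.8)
The plan is to start from the arc-function $f\colon V\to A\mathbb{Z}_m$ supplied by Lemma~\ref{arcfun} (so $m\le 2n$ and its conditions (1),(2) hold) and to \emph{shrink} its point set down to the set of left end-points of the arcs. Throughout, write $f(v)=\{i_v,i_v+1,\dots,j_v\}$, so that $i_v$ is the left and $j_v$ the right end-point of the arc of~$v$.

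First I would dispose of (i). If $f(u)\subseteq f(v)$ for distinct $u,v$, then every arc meeting $f(u)$ also meets $f(v)$; since $f(u)\neq\emptyset$ this yields $v\in uR$ together with $uR\subseteq\{v\}\cup vR$, contradicting hypothesis~\eqref{subset}. Hence no arc is contained in another and all arcs are distinct. This argument uses nothing about the particular representation, so it will equally certify (i) for the \emph{final} function. A short consequence is that two distinct arcs cannot share a left end-point (otherwise one would contain the other), and likewise cannot share a right end-point; thus each point of $\mathbb{Z}_m$ is the left end-point of at most one arc and the right end-point of at most one arc.

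Next comes the construction. Let $L\subseteq\mathbb{Z}_m$ be the set of left end-points; by the previous paragraph $|L|=n$. Cyclically relabel $L$ as $\mathbb{Z}_n$ and set $f'(v)=\{\ell\in L:\ \ell\in f(v)\}$, a block of consecutive new points containing $i_v$. To see that $f'$ is again an arc-function of $\Gamma$ I would first prove the elementary fact that for two distinct (proper) arcs $P,Q$ one has $P\cap Q\neq\emptyset$ if and only if $i_P\in Q$ or $i_Q\in P$; the forward direction follows by walking counter-clockwise from a common point until one first leaves $Q$, which must happen at $i_Q$, forcing $i_Q\in P$. Given this, $f(u)\cap f(v)\neq\emptyset$ is equivalent to $f'(u)\cap f'(v)\neq\emptyset$, so $\Gamma$ is the intersection graph of $\im(f')$ and the length of $f'$ is $n$, which is (ii). Since the antichain property is preserved, the $n$ arcs $f'(v)$ have $n$ distinct new left end-points and $n$ distinct new right end-points among the $n$ new points; once we know each $f'(v)$ is a proper arc with at least two points---which is exactly condition~(2)---it follows that every new point is the left end-point of exactly one arc and the right end-point of exactly one \emph{other} arc, i.e.\ (iii) holds.

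The hard part, and the place where~\eqref{subset} is used essentially, is showing that each $f'(v)$ is a proper arc with at least two points, equivalently that $f(v)$ contains at least two but not all of the left end-points. If $f(v)$ contained only the single left end-point $i_v$, then by condition~(1) every other point of $f(v)$ is a right end-point; taking $w$ to be the arc ending just before $j_v$ (or, when $f(v)=\{i_v,j_v\}$, using that every neighbour of $v$ must contain $i_v$, so the neighbours of $v$ pairwise meet) one checks that $vR\subseteq\{w\}\cup wR$ with $w\in vR$, contradicting~\eqref{subset}. If instead $f(v)$ contained \emph{all} left end-points, then $v$ would be adjacent to every other vertex, whence for any $u$ we would have $v\in uR$ and $uR\subseteq V=\{v\}\cup vR$, again contradicting~\eqref{subset}. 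I expect this domination analysis---especially selecting the correct dominating arc $w$ in the two-point subcase---to be the main obstacle; the only configuration it does not cover is an isolated vertex giving a two-point arc with both end-points of degree one, which does not occur once $\Gamma$ is assumed to have no isolated vertices (minimum degree at least one).
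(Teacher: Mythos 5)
Your construction is genuinely different from the paper's, and most of it is sound. The paper contracts the point set by the equivalence $i\sim j$ iff every arc contains both or neither of $i,j$, then proves statement (iii) by a case analysis on how many arcs end at a given point, and finally deduces (ii) from (iii) by counting; you instead prove (i) first (directly from \eqref{subset}, an argument valid for \emph{any} arc-function of $\Gamma$), contract to the set $L$ of left end-points so that (ii) holds by construction, and recover (iii) by a pigeonhole count. Your key fact on when two proper arcs meet, the re-derivation of the antichain property for $f'$ (correctly re-proved rather than ``transported'', since contraction can in general create containments), and the final counting are all correct.

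There is, however, one genuine gap. Your ``hard part'' (each $f(v)$ contains at least two left end-points) needs $v$ to possess at least one neighbour, and you dispose of the problematic configuration by saying it ``does not occur once $\Gamma$ is assumed to have no isolated vertices.'' That is not a hypothesis of the theorem: the theorem assumes only that $\Gamma$ is non-empty (has an edge) and satisfies \eqref{subset}, and \eqref{subset} is vacuous for an isolated vertex, so a priori such a vertex could exist. As written, you have proved the statement only under the extra assumption of minimum degree at least one. The missing step is to show that a non-empty circular-arc graph satisfying \eqref{subset} has no isolated vertex; this is exactly the content of the paper's Remark~\ref{remark1}, which is invoked at the corresponding point of the paper's own proof: if $\Gamma$ were disconnected (in particular, if an isolated vertex coexisted with a component containing an edge), the arcs could not cover the whole circle, so $\Gamma$ would be an interval graph, hence chordal, hence would have a simplicial vertex $u$ with a neighbour $w$; then $uR\subseteq\{w\}\cup wR$ with $w\in uR$, contradicting \eqref{subset}. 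You could also patch it inside your own framework: if $v$ is isolated, every other arc avoids the point $i_v$, so the remaining arcs are intervals; among the non-isolated vertices (which exist since $\Gamma$ is non-empty) take the one whose interval has the leftmost right end-point --- all of its neighbours' intervals contain that right end-point, so it is simplicial and has a neighbour, giving the same contradiction. With that supplied, your proof goes through.
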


\begin{remark}\label{remark1}
The graph $\Gamma$  satisfying  condition \eqref{subset} is a
connected graph. Indeed, otherwise, it is easily seen that
$\Gamma$ is an  interval graph. On the other hand,
  each interval graph is chordal, and so it  has a  vertex whose  neighborhood  is a complete graph (see~\cite{bondy}) which
   contradicts the condition \eqref{subset}.
\end{remark}
\begin{proof}
By Lemma \ref{arcfun}, there exists  an arc-function $f'$ of
$\Gamma$ of length $m'\leq 2n$. Denote by $\sim$ the binary
relation on  $\mathbb{Z}_{m'}$ defined by  $i\sim j$ if and only
if for any $ v\in V$
$$
 i,j\in f'(v) {\text{~or~}} i,j\notin f'(v).
$$
One can check  that $\sim$ is an equivalence relation, and
 its equivalence  classes  belong to $A\mathbb{Z}_{m'}$. By the definition of $"\sim"$ any
 element of $\im(f')$ is a disjoint union of some classes.
Let us define a function $f$ such that for each $v\in V$, $f(v)$
is the set of $\sim$-classes contained in $f'(v)$. The
equivalence classes of $\sim$ can be identified by
$\mathbb{Z}_{m}$. By this identification we have $f(v)\in
A\mathbb{Z}_{m}$.

 We claim that $f$ is an
arc-function of $\Gamma$.  Indeed, from the  definition of $f$ it
follows that  for each two vertices $u$ and $v$,  the set
$f(u)\cap f(v)$ is not empty if and only if the set $f'(u)\cap
f'(v)$ is not empty. Moreover, statement~(1) of
Lemma~\ref{arcfun} is obvious. Thus,  it is sufficient to verify
that  statement~(2) of Lemma \ref{arcfun} occurs. Suppose on the
contrary that there is a vertex $v\in V$ such that $f(v)$
contains exactly one element. Then $f'(v)$ is a class of the
equivalence $\sim$. By condition \eqref{subset} this implies that
$v $ is an isolated vertex in
 $\Gamma$, which is impossible by Remark \ref{remark1}. Thus $f$ is an
arc-function of $\Gamma$.

By Lemma~\ref{arcfun}, the graph $\Gamma$ is isomorphic to the
intersection graph of the family $\im(f')$. Thus for  two
adjacent vertices $u$ and $v$ in $V$, if $f'(u)\subseteq f'(v)$
then any vertex in $V\backslash \{v \}$ which is adjacent to $u$
in $\Gamma$  is also  adjacent to $v$. On the other hand, it is
easy to see that $f'(u)\subseteq f'(v)$ is equivalent to
$f(u)\subseteq f(v)$. Therefore, we have
\begin{equation}\label{vr}
f(u)\subseteq f(v) ~~~ \Rightarrow~~~ uR \subset \{v\}\cup vR.
\end{equation}
Hence,  statement ({\it i}) follows from condition~\eqref{subset}.

Statement ({\it ii}) is a consequence of statement ({\it iii}).
First we will show that any element $i\in \mathbb{Z}_{m}$ is the
end-point of exactly two sets in $\im(f)$. Suppose on the
contrary that there is an element $i\in \mathbb{Z}_{m}$ which is
an end-point of at least three sets of $\im(f)$. By statement (2)
of Lemma \ref{arcfun}, there are  at least  two sets  $f(u)$ and
$f(v)$ such that
$$
i+1\in f(v)\cap f(u)~~{\text or }~~i-1\in f(v)\cap f(u).
$$
It follows that in any case   $f(v)\subseteq f(u)$ or
$f(u)\subseteq f(v)$. From~\eqref{vr}, this  contradicts
condition~\eqref{subset}. Thus any element $i\in \mathbb{Z}_{m}$
is the end-point of at most two sets in $\im(f)$.

To complete the proof, suppose that   there exists $i\in
\mathbb{Z}_{m} $ which is an  end-point of exactly one set, say
$f(v)$, in $\im(f)$. By statement (2) of Lemma~\ref{arcfun}, we
have $i+1\in f(v)$ or $i-1\in f(v)$. Suppose that the former
inclusion holds. Then by~\eqref{vr} we have
\begin{equation}\label{uvr}
u\in vR ~  \Rightarrow ~   i \not\in f(v)\cap f(u).
\end{equation}
 If $i+1$ is an end-point of $f(v)$ then,
 since $\Gamma$ does not contain any isolated vertex,
so  there is a vertex $u\in V $ such that $f(u)\cap f(v)$ is not
empty. From~\eqref{uvr} it follows that  $i+1$ is an end-point of
$f(u)$. Let $w\in vR$, then the set  $f(v)\cap f(w)$ is not
empty. From~\eqref{uvr} we have $i+1\in f(w)$ and so $f(w)\cap
f(u)$ is not empty. So, $w\in uR$. Therefore, in this case $vR
\subseteq \{u\} \cup uR$, that contradicts the condition
\eqref{subset}. Thus we  suppose  $i+1$ is not an end-point of
$f(v)$. In this case,  statement (1) of Lemma \ref{arcfun}
implies that  $i+1$ is an end-point of a set in $\im(f)$, say
$f(u)$. From Lemma~\ref{arcfun}, we have $f(u)\nsubseteq f(v)$ and
it follows that $f(v)\backslash \{i\} \subseteq f(u)$. Now
from~\eqref{uvr}, we have $vR\subseteq \{u\} \cup uR$, which
contradicts the condition~\eqref{subset}. If $i-1\in f(v)$, by
the same argument we have a contradiction again.  Thus, any
element $i\in \mathbb{Z}_{m}$ is the end-point of exactly two
sets in $\im(f)$. This completes the proof of the theorem.
\hfill$\Box$
\end{proof}
\vspace{5mm}

\medskip
Any arc-function $f:V\to A\mathbb{Z}_n$, satisfying  conditions
({\it i}), ({\it ii}) and~({\it iii}) of Theorem~\ref{reduced} is
called the {\it reduced arc-function} of the graph~$\Gamma$.
\medskip
\begin{cor}\label{n(v)}
Let $\Gamma=(V,R)$ be a  graph which satisfies the conditions of
Theorem \ref{reduced}. Then  $n_R(v)=2|f(v)|-2$ for each vertex
$v\in V$, where $f$ is the reduced arc-function of $\Gamma$.
\end{cor}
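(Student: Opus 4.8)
The plan is to translate the statement entirely into the combinatorics of the reduced arc-function and count endpoints. First I note that properties (ii) and (iii) of Theorem~\ref{reduced} force $f$ to be a bijection: since every arc in $\im(f)$ has exactly two (distinct) end-points and each of the $n$ elements of $\mathbb{Z}_n$ is an end-point of exactly two sets of $\im(f)$, a double count of end-point incidences gives $2|\im(f)| = 2n$, so $|\im(f)| = n = |V|$ and $f$ is injective. Thus each vertex corresponds to a distinct arc, and $u \in vR$ if and only if $u\neq v$ and $f(u)\cap f(v)\neq\emptyset$. The strategy is then to count, for a fixed $v$, the end-points of arcs of $\im(f)$ that lie inside $f(v)$: by property (iii) there are exactly $2|f(v)|$ such incidences, of which exactly two are contributed by $f(v)$ itself (its end-points $i_v$ and $j_v$). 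So it suffices to show that the map sending a neighbour $u$ of $v$ to its unique end-point lying in $f(v)$ is a bijection onto the remaining $2|f(v)|-2$ incidences.

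To establish this bijection I would classify each arc $f(u)$ according to how many of its two end-points lie in $f(v)$ as opposed to the complementary arc $C = \mathbb{Z}_n\setminus f(v)$. If both end-points lie in $C$, then $f(u)$ is either contained in $C$ (hence disjoint from $f(v)$ and not a neighbour) or wraps around $C$ so that $f(v)\subseteq f(u)$, which property (i) forbids; hence such arcs are exactly the non-neighbours with no end-point in $f(v)$. If one end-point lies in $f(v)$ and one in $C$, then $f(u)$ contains its end-point in $f(v)$, so $u\in vR$ and this arc contributes exactly one incidence. This shows every neighbour contributes at least one end-point in $f(v)$ and that the "split" arcs contribute exactly one each, so the only thing left to rule out is a neighbour $u\neq v$ whose arc has \emph{both} end-points in $f(v)$.

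The main obstacle is precisely this last case, where condition~\eqref{subset} must do the work. An arc $f(u)$ with both end-points in $f(v)$ is, by property (i), not contained in $f(v)$, so running from $i_u$ to $j_u$ clockwise it must leave $f(v)$ and sweep all of $C$; consequently $\mathbb{Z}_n\setminus f(v)\subseteq f(u)$ and, in particular, $u\in vR$. I would then invoke \eqref{subset} in the form $u\in vR \Rightarrow vR\not\subset\{u\}\cup uR$ to produce a vertex $w\in vR$ with $w\neq u$ and $w\notin uR$; from $f(w)\cap f(u)=\emptyset$ it follows that $f(w)\subseteq \mathbb{Z}_n\setminus f(u)\subseteq f(v)$, and since $f$ is injective this is a proper inclusion $f(w)\subsetneq f(v)$, contradicting property (i). (A symmetric application works regardless of which of $i_u,j_u$ one starts from.) This rules out all such "wrap-around" arcs, so $f(v)$ is the unique arc with both end-points in $f(v)$ and every neighbour has exactly one end-point there. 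The desired bijection follows, giving $n_R(v)=2|f(v)|-2$.
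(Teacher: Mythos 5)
Your proposal is correct, and its skeleton is the same endpoint-counting strategy as the paper's own proof: by statement~(iii) there are exactly $2|f(v)|$ endpoint-incidences inside $f(v)$, two of which are contributed by $f(v)$ itself, and one then shows that each neighbour of $v$ accounts for exactly one of the remaining $2|f(v)|-2$. The real difference lies in how the key claim ``every neighbour's arc has exactly one end-point in $f(v)$'' is justified. The paper attributes it entirely to statement~(i), but (i) by itself does not exclude the wrap-around configuration in which both end-points of $f(u)$ lie in $f(v)$ and $f(u)\cup f(v)=\mathbb{Z}_n$: for instance, on $\mathbb{Z}_4$ the four arcs $\{0,1,2\}$, $\{1,2,3\}$, $\{2,3,0\}$, $\{3,0,1\}$ satisfy (i), (ii), (iii), yet the resulting intersection graph is $K_4$, where $n_R(v)=3$ while $2|f(v)|-2=4$. (This is not a counterexample to the corollary, since $K_4$ violates condition~\eqref{subset}; but it shows that any complete proof must actually invoke \eqref{subset}, which the paper's two-line argument never does.) Your treatment of exactly this case --- apply \eqref{subset} to get $w\in vR$ with $w\notin\{u\}\cup uR$, deduce $f(w)\subseteq\mathbb{Z}_n\setminus f(u)\subseteq f(v)$, and contradict (i) --- supplies the missing step, and it mirrors the way the paper itself disposes of the case $f(u)\cup f(v)=\mathbb{Z}_n$ later, in the proof of Proposition~\ref{regular}. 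Your preliminary double-counting observation that (ii) and (iii) force $f$ to be injective is likewise something the paper uses only tacitly (distinct neighbours must give distinct arcs for the count to work). In short: same route as the paper, but your version closes a genuine gap in the paper's own proof rather than merely reproducing it.
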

\begin{proof}
Let $v\in V$. From statement  ({\it iii}) of Theorem
\ref{reduced} any $i\in f(v)$ is the end-point of exactly two
elements in $\im(f)$. Therefore, we get
\begin{equation}\label{nrf}
n_R(v) \leq 2|f(v)|-2.
\end{equation}
In fact,  by  statement  ({\it i}) of Theorem \ref{reduced}  for
each $u\in vR$, exactly one of the end-points of $f(u)$ belongs
to $f(v)$. Thus we have equality in~\eqref{nrf}, and we are done.
\hfill $\Box$
\end{proof}
\begin{prop}\label{regular}
Let $\Gamma=(V,R)$ be a non-empty circular-arc graph without
twins. Then $\Gamma$ is regular  if and only if for any two
vertices $u$ and $v$ in $V$ we have:
\begin{equation}\label{subset1}
v\in uR ~~~ \Rightarrow~~~ uR \not\subset  \{v\}\cup vR.
\end{equation}
\end{prop}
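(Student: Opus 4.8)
The plan is to prove the two implications separately; the forward direction is a short counting argument, while the converse carries the real content and rests on the reduced arc-function machinery just developed.

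For the direction assuming regularity (together with the twin-free hypothesis), I would argue by contraposition. Suppose \eqref{subset1} fails, so there exist $u,v$ with $v\in uR$ and $uR\subseteq\{v\}\cup vR$. Since $v\in uR$ forces $u\in vR$, and since $u\notin uR$, every neighbor of $u$ other than $v$ must lie in $vR\setminus\{u\}$; that is, $uR\setminus\{v\}\subseteq vR\setminus\{u\}$. Now regularity enters: both sides have cardinality $n_R(u)-1=n_R(v)-1$, so the inclusion is forced to be an equality. Hence $u$ and $v$ are adjacent with $uR\setminus\{v\}=vR\setminus\{u\}$, i.e.\ they are twins, contradicting the hypothesis. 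This shows a regular twin-free graph satisfies \eqref{subset1}.

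For the converse I would invoke Theorem~\ref{reduced}, whose hypothesis is exactly \eqref{subset1}, to fix a reduced arc-function $f\colon V\to A\mathbb{Z}_n$, and then Corollary~\ref{n(v)}, which gives $n_R(v)=2|f(v)|-2$. Thus regularity is equivalent to all sets $f(v)$ having the same cardinality, and the whole problem reduces to a combinatorial fact about the family $\im(f)$. The key structural observation is that condition (iii) of Theorem~\ref{reduced} (each point of $\mathbb{Z}_n$ is an endpoint of exactly two sets) together with the antichain condition (i) (no set contains another) forces each point to be the left endpoint of exactly one set and the right endpoint of exactly one set, since two sets sharing a left endpoint, or sharing a right endpoint, would be nested. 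Consequently the sets are indexed by their left endpoints, each $f(v)$ is an arc $[i,\pi(i)]$ for a bijection $\pi$ of $\mathbb{Z}_n$, and I may record the clockwise length $\ell_i$ determined by $\pi(i)=i+\ell_i$, with $1\le\ell_i\le n-2$.

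The heart of the matter, and the step I expect to be the main obstacle, is showing that $\ell_i$ is constant. I would compare the two consecutive arcs starting at $i$ and at $i+1$: in coordinates shifted so that $i=0$ these are $[0,\ell_i]$ and $[1,1+\ell_{i+1}]$, and the antichain condition forbids the latter from sitting inside the former, which happens precisely when $1+\ell_{i+1}\le\ell_i$. Ruling this out yields $\ell_{i+1}\ge\ell_i$ for every $i\in\mathbb{Z}_n$, and reading this around the cycle $\ell_0\le\ell_1\le\cdots\le\ell_{n-1}\le\ell_0$ forces all $\ell_i$ equal; hence all $|f(v)|$ are equal and $\Gamma$ is regular by Corollary~\ref{n(v)}. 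The delicate points to verify are that no wrap-around spoils the containment comparison (which is safe because $\ell_i\le n-2$, so both arcs fit inside $[0,n-1]$ without crossing $0$) and the endpoint bookkeeping that makes $\pi$ a genuine permutation. Notably, this converse uses only \eqref{subset1} and not the twin-free hypothesis, which is needed solely in the forward implication.
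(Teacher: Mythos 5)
Your proof is correct. The forward implication is essentially the paper's own argument in contrapositive form: both rest on the observation that for adjacent, non-twin vertices the sets $uR\setminus\{v\}$ and $vR\setminus\{u\}$ cannot coincide, while regularity forces an inclusion between them to be an equality. The converse, however, takes a genuinely different route. The paper first argues that $\Gamma$ is connected (via the argument of Remark~\ref{remark1}), reduces regularity to the claim that \emph{adjacent} vertices $u,v$ satisfy $|f(u)\setminus f(v)|=|f(v)\setminus f(u)|$, and then builds an explicit bijection $i\mapsto j_i$ between these two sets; this requires an auxiliary reduction to the case $f(u)\cup f(v)\neq\mathbb{Z}_n$ and a case analysis on which of the two arcs ending at $i$ contains $f(u)\cap f(v)$. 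You instead argue globally: conditions (i) and (iii) of Theorem~\ref{reduced} imply that each point of $\mathbb{Z}_n$ is the left endpoint of exactly one arc and the right endpoint of exactly one arc (two arcs sharing a left or right endpoint would be nested), so the arcs are $\{i,\ldots,i+\ell_i\}$ for $i\in\mathbb{Z}_n$; the antichain condition applied to the two arcs starting at $i$ and $i+1$ gives $\ell_{i+1}\geq\ell_i$, and reading this inequality around the cycle forces all $\ell_i$ equal, whence regularity by Corollary~\ref{n(v)}. Your wrap-around check is sound ($1\leq\ell_i\leq n-2$ keeps both arcs inside an integer interval), and your counting of endpoint incidences does make $\pi$ a bijection. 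What your approach buys: it bypasses the connectivity step and the paper's bijection construction entirely, proves the stronger statement that all arcs have equal length (not merely that adjacent vertices have equal degree), and makes transparent your correct closing remark that this direction needs only \eqref{subset1} and not the twin-free hypothesis (indeed \eqref{subset1} already implies twin-freeness). The paper's argument, being local and pairwise, is what it is; yours is shorter and arguably cleaner.
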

\begin{proof}
 Suppose that $\Gamma$ is regular and $u$ and $v$ are two adjacent vertices of the graph $\Gamma$. Then $| \{v\} \cup vR|=|\{u\}\cup uR|$.
However,  $ \{v\} \cup vR \neq \{u\}\cup uR$ because $u$ and $v$
are not twins.
 It follows that there exists a vertex in $uR$, different from  $v$, which is
not in $vR$;  and  there exists a vertex in $vR$, different from
$u$, which  is not in $uR$. Therefore, the condition
\eqref{subset1} holds.

Conversely, suppose that  $\Gamma$ satisfies
condition~\eqref{subset1}. By the same argument as  Remark
\ref{remark1} the graph $\Gamma$ is  connected. Thus, it is
sufficient to show that any two adjacent vertices $u$~and~$v$
have the same degree. On the other hand, by Theorem~\ref{reduced}
there is a reduced arc-function $f:V\to A\mathbb{Z}_n$,
 where $n=|V|$. So by Corollary~\ref{n(v)} it is
sufficient to show that $|f(v)|=|f(u)|$, or equivalently
\begin{equation}\label{feq}
|f(u)\backslash f(v)|=|f(v)\backslash f(u)|.
\end{equation}
Note that the set   $f(u)\cap f(v)$ is not empty because  $u$ and
$v$ are adjacent.
 Moreover, by hypothesis,  $u$~and $v$ are
not twins so $f(v)\neq f(u)$. We may assume that
\begin{equation}\label{cap}
 f(u)\cup f(v) \neq \mathbb{Z}_n.
\end{equation}
Indeed, otherwise, we would have $f(u)\cup f(v)=\mathbb{Z}_n$ and
then from statement~({\it i}) of Theorem~\ref{reduced}, any set in
$\im(f)$ different from both $f(u)$ and $ f(v) $ has one end-point
in $f(v)$ and one end-point in $f(u)$. This implies that  any
vertex in $V\backslash \{u,v\}$ is adjacent to both $u$ and $v$,
which is impossible because $u$ and $v$ are not twins.

 Let $i\in f(u)\backslash f(v)$. Then  by  statement
({\it iii}) of Theorem \ref{reduced},   there are exactly two
vertices $u_i, v_i\in V$, for which $i$ is the end-point
 of both
$f(u_i)$ and $f(v_i)$, or equivalently due to~\eqref{cap} we have
$$
\{i\}=f(u_i) \cap f(v_i).
$$
 Moreover,  by  statement ({\it i}) of Theorem~\ref{reduced},  neither $f(u_i)$
 nor
 $f(v_i)$ is a  subset of $f(u)$. Now, from~\eqref{cap}
 it  follows that   the  end-points of $f(u_i)$ and  $f(v_i)$ different from  $i$ is not in the
set $f(u)$. Thus,  exactly one of $f(u_i)$ or $f(v_i)$ contains
the set $f(v)\cap f(u)$. Assume that $f(u)\cap f(v)\subset
f(u_i)$. Since, by  statement ({\it i}) of
Theorem~\ref{reduced},  $f(v)$ is not a subset of $f(u_i)$, it
follows that the  end-point of $f(u_i)$, different from $i$, is
in the set $f(v)\backslash f(u)$, denote this end-point by
$j_{i}$, (see Fig. 1).

\setlength{\unitlength}{0.8cm}
\begin{picture}(6,4)
 \thicklines

\qbezier(5,0.5)(8,2)(11,0.9)

 \put(5,0.5){\circle*{0.18}}
 \put(7.15,1.22){\circle*{0.18}}
 \put(11,0.9){\circle*{0.18}}
 \put(7,0.6){$i$}
 \put(10.9,0.4){$j_i$}
 \put(8.2,0.6 ){$f(u_i)$}
 \put(5.5,0.3 ){$f(v_i)$}

\qbezier(6,1.2)(8,2)(10,1.55)
  \put(6,1.2){\circle*{0.18}}
  \put(10,1.55){\circle*{0.18}}
  \put(6.3,1.9){$f(u)$}

\qbezier(8,2)(10.5,2.2)(12.5,1)
   \put(8,2){\circle*{0.18}}
   \put(12.5,1){\circle*{0.18}}
    \put(11,1.9){$f(v)$}

     \put(3.5,-0.8){Fig. 1:  Some arcs of reduced arc-function $f$ of $\Gamma$ }
\end{picture}
\vspace{1.3cm}

So far we could define the mapping,  $i\to j_i$, from
$f(u)\backslash f(v)$ to $f(v)\backslash f(u)$. Now we claim that
this mapping is bijection. To do so, we first prove that it is
injective.
 Suppose on the contrary that  there are
$i,i'\in f(u)\backslash f(v)$ such that  $j_{i} = j_{i'}$. Then
$f(u_i)\subset f(u_{i'})$ or $f(u_{i'})\subset f(u_{i})$.
However, in both cases this  contradicts   statement ({\it i}) of
Theorem~\ref{reduced}. Now, let $j\in f(v)\backslash f(u)$ then
in a similar way there is a corresponding element of
$f(u)\backslash f(v)$, say~$i$. By statement ({\it i}) of
Theorem~\ref{reduced}, it is clear that $j_i=j$. This shows that
the mapping is surjective.

The same argument can be done for  the
 case $f(u)\cap f(v)\subset f(v_i)$.
Thus~\eqref{feq} follows and this proves the proposition.
 \hfill $\Box$
\end{proof}
\begin{cor}\label{correduced}
Let $\Gamma$ be an $m$-regular  circular-arc graph on $n$
vertices. Suppose that $m\geq 1$ and the graph has no twins. Then
for each  reduced arc-function  $f$ of $\Gamma$ and  each $v\in
V$, we have $|f(v)|=\frac{m+2}{2}$.
\end{cor}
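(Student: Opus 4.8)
The plan is that this corollary follows by directly combining the two preceding results, Proposition~\ref{regular} and Corollary~\ref{n(v)}. First I would verify that $\Gamma$ satisfies the structural condition~\eqref{subset} needed to invoke Corollary~\ref{n(v)}. Since $m \geq 1$ the graph $\Gamma$ is non-empty, and by assumption it is regular and twin-free; hence Proposition~\ref{regular} applies and yields exactly condition~\eqref{subset1}, which is verbatim the condition~\eqref{subset} appearing in the hypothesis of Theorem~\ref{reduced}. Therefore $\Gamma$ meets the hypotheses of Theorem~\ref{reduced}: a reduced arc-function $f$ exists, satisfies statements~(i)--(iii), and the identity of Corollary~\ref{n(v)} becomes available.

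The computation is then a single line. Corollary~\ref{n(v)} gives $n_R(v) = 2|f(v)| - 2$ for every vertex $v$, while $m$-regularity gives $n_R(v) = m$. Equating the two and solving for $|f(v)|$ yields $|f(v)| = \frac{m+2}{2}$, as claimed; note this also forces $m$ to be even, which is consistent with $2|f(v)| - 2$ being even and with the parity appearing later in the corollary on automorphism groups.

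The one point I would take care over—more bookkeeping than a genuine obstacle—is the universal quantifier ``for each reduced arc-function''. Since the derivation of $n_R(v) = 2|f(v)| - 2$ in Corollary~\ref{n(v)} relies only on the end-point properties~(i) and~(iii), which hold for every reduced arc-function by definition, the value $|f(v)| = \frac{m+2}{2}$ comes out the same for all reduced arc-functions of $\Gamma$ and all $v \in V$. In short, there is no serious difficulty here: the substantive work was already done in establishing Proposition~\ref{regular} (that regularity is equivalent to~\eqref{subset1}) and Corollary~\ref{n(v)} (the degree formula), and the present statement merely records their combination for a regular graph.
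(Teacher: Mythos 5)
Your proposal is correct and follows essentially the same route as the paper: the paper's proof likewise observes that non-emptiness ($m\geq 1$), regularity, and twin-freeness let Proposition~\ref{regular} supply condition~\eqref{subset}, so that Corollary~\ref{n(v)} applies and $m = n_R(v) = 2|f(v)|-2$ gives $|f(v)| = \frac{m+2}{2}$. Your extra remarks on the quantifier over reduced arc-functions and on the parity of $m$ are sound elaborations of what the paper leaves implicit.
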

\begin{proof}
By the hypothesis the graph $\Gamma$ is non-empty and without
twins. Therefore, the hypothesis of Corollary \ref{n(v)} holds
and we are done. \hfill $\Box$
\end{proof}
\section{Graphs and schemes}
In this section we prove some results  on the  scheme of a graph.
In particular we will study a relationship between the
lexicographic product of two graphs and the wreath product of
their schemes.

\begin{lem}\label{rk}
Let $\Gamma=(V,R)$ be a graph. For each integer $k$, let
$$
R_k=\{(u,v) \in R :  ~~ |uR \cap vR|=k \}.
$$
Then $R_k$ is a union of some basic relations of the scheme
$\fis(\Gamma)$.
\end{lem}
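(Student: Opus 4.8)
The plan is to show that the integer-valued function $(u,v)\mapsto |uR\cap vR|$ is constant on each basic relation of $\fis(\Gamma)$ that is contained in $R$; once this is established, $R_k$ is simply the union of those basic relations on which this constant equals $k$, which proves the lemma.

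Write $\fis(\Gamma)=(V,S)$. By the very definition of $\fis(\Gamma)=\fis(R)$, the edge set $R$ is a union of basic relations, so putting $S_R=\{s\in S:\ s\subseteq R\}$ we have $R=\bigcup_{s\in S_R}s$. First I would fix a pair $(u,v)\in R$ and decompose the two neighbourhoods. Since $S$ is a partition of $V^2$, for a fixed point $u$ the sets $\{us:\ s\in S\}$ form a partition of $V$; restricting to $S_R$ gives the disjoint decomposition $uR=\bigcup_{s\in S_R}us$, and likewise $vR=\bigcup_{s'\in S_R}vs'$. Because every $w\in V$ lies in a unique $us$ and a unique $vs'$, intersecting these decompositions yields the disjoint union $uR\cap vR=\bigcup_{s,s'\in S_R}(us\cap vs')$, whence
$$
|uR\cap vR|=\sum_{s\in S_R}\sum_{s'\in S_R}|us\cap vs'|.
$$

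The key step is to recognise each summand as an intersection number of the scheme. Indeed, $us\cap vs'=\{w:\ (u,w)\in s \text{ and } (w,v)\in (s')^*\}$, so $|us\cap vs'|$ counts exactly the points $w$ with $(u,w)\in s$ and $(w,v)\in (s')^*$. Comparing this with the defining formula $c_{rs}^t=|vr\cap us^*|$ (reading off start point $u$, end point $v$, first relation $s$, second relation $(s')^*$), this cardinality equals $c_{s,(s')^*}^{\,t}$ with $t=r(u,v)$. By the defining property of a scheme, such an intersection number does not depend on the choice of $(u,v)\in t$. Consequently $|uR\cap vR|=\sum_{s,s'\in S_R}c_{s,(s')^*}^{\,t}$ depends only on $t=r(u,v)$, i.e.\ it is constant on each basic relation $t\in S_R$; denoting this constant by $\kappa(t)$, we obtain $R_k=\bigcup\{t\in S_R:\ \kappa(t)=k\}$, a union of basic relations.

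The routine points to verify carefully are that the two neighbourhood decompositions are genuinely disjoint, so that cardinalities add, and, most importantly, that each summand is correctly matched to the scheme's intersection numbers. Getting the transpose $(s')^*$ and the order of composition right in the formula $c_{rs}^t=|vr\cap us^*|$ is the only place where an error could creep in, so I expect this bookkeeping with transposes to be the main (though minor) obstacle. Everything else follows directly from the fact that $R$ is, by construction, a union of basic relations of $\fis(\Gamma)$ and from the well-definedness of the intersection numbers.
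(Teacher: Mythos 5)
Your proposal is correct and follows essentially the same route as the paper: both express $|uR\cap vR|$, for $(u,v)$ in a basic relation $t\subseteq R$, as a sum of intersection numbers $c^{\,t}_{s,(s')^*}$ over basic relations $s,s'$ contained in $R$, and conclude that this quantity is constant on $t$, so $R_k$ is a union of basic relations. Your version is in fact slightly more careful than the paper's, which writes the sum as $\sum_{r,t\in S'}c^s_{rt}$ and suppresses the transpose bookkeeping (harmless here since $R$ is symmetric, so $S'$ is closed under $*$).
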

\begin{proof}
Let $S$ be the set of basic relations of $\fis(\Gamma)$. Then
$R=\bigcup_{s\in S'} s$ where $S' \subset S$.  It is sufficient
to show that  $R_k$ contains any relation from $S'$ whose
intersection with $R_k$ is not empty. To do this, let $s$ be such
a relation. Then there exists a pair $(u,v)\in s$ such that  $|uR
\cap vR|=k$. On the other hand, by the definition of intersection
numbers we have
$$|uR \cap vR|= \sum_{r,t\in S'}c^s_{rt}.$$  Thus the number
$|uR \cap vR|$ does not depend on the choice of $(u,v)\in s$. By
definition of $R_k$ this implies that $s \subset R_k$ as required.
\hfill $\Box$
\end{proof}

\medskip
\begin{thrm}\label{equ}
Let $\Gamma$ be a    graph on the vertex set $V$ and let
$$
E=\{(u,v)\in V\times V~ :~ u ~{\rm{and}}~ v ~{\rm{are ~twins
~or}}~ u=v\}.
$$
 Then $E$ is an  equivalence relation  of the scheme $\fis(\Gamma)$. Moreover, if $\Gamma$
is a graph with association scheme then, it  is isomorphic to
lexicographic product of the graph $\Gamma_{V/E}$ and  a complete
graph.
\end{thrm}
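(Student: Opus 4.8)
The plan is to prove the two assertions separately: first that $E$ is an equivalence relation of $\fis(\Gamma)$, and then the structural decomposition. For the first assertion I would begin by observing that $u$ and $v$ are twins precisely when $uR\cup\{u\}=vR\cup\{v\}$, i.e.\ when their closed neighbourhoods coincide; adjacency of $u$ and $v$ is then automatic, and from this reformulation reflexivity, symmetry and transitivity of $E$ are immediate, so $E$ is an equivalence relation on $V$ in the set-theoretic sense.

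It remains to see that $E\in S^\cup$, where $S$ is the set of basic relations of $\fis(\Gamma)$. The key is a numerical characterisation: for $(u,v)\in R$, the vertices $u$ and $v$ are twins if and only if $|uR\cap vR|=|uR|-1=|vR|-1$. The forward direction is a direct computation from $uR=(uR\cap vR)\cup\{v\}$ and $vR=(uR\cap vR)\cup\{u\}$; conversely, since $v\in uR\setminus vR$, the equality $|uR\cap vR|=|uR|-1$ forces $uR\setminus\{v\}=uR\cap vR\subseteq vR$, and symmetrically, giving $uR\setminus\{v\}=vR\setminus\{u\}$. Now by Lemma~\ref{rk} the quantity $|uR\cap vR|$ is constant on every basic relation contained in $R$; moreover, recalling from \cite{EP} that every basic relation lies in a product $V_i\times V_j$ of two fibers of the scheme and that the valency of a basic relation is constant on each fiber, the quantities $|uR|$ and $|vR|$ are likewise constant on each basic relation $s\subseteq R$. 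Hence the twin condition holds either for all pairs in $s$ or for none, so the set of twin pairs is a union of basic relations; since $1_V\in S^\cup$ in every scheme, $E=1_V\cup\{\text{twin pairs}\}\in S^\cup$, proving the first assertion.

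For the second assertion, suppose $\fis(\Gamma)$ is association, so that $1_V\in S$ and the scheme is homogeneous. Then the relation $E\in S^\cup$ has constant valency $|uE|$, so all $E$-classes share one common size $t$. I would next record two structural facts flowing from the closed-neighbourhood description of twins: each $E$-class $X$ induces a complete subgraph of $\Gamma$ (twins are adjacent), and for distinct classes $X,Y$ adjacency is all-or-nothing, i.e.\ either $X\times Y\subseteq R$ or $(X\times Y)\cap R=\emptyset$ (because $uR\cup\{u\}=u'R\cup\{u'\}$ for $u,u'\in X$ makes every vertex outside $X$ adjacent to all of $X$ or to none, and symmetrically for $Y$). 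These are exactly the conditions defining $\Gamma_{V/E}$ and a lexicographic product.

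To finish I would fix bijections $\phi_X\colon X\to\{1,\dots,t\}$ for each class $X$ and define $\Phi(v)=(X_v,\phi_{X_v}(v))$, where $X_v$ is the class of $v$; this is a bijection from $V$ onto the vertex set of $\Gamma_{V/E}[K_t]$. Checking that $\Phi$ preserves adjacency splits into the within-class case (both graphs are complete there) and the between-class case (governed by quotient adjacency), both matching by the two structural facts, so $\Gamma\cong\Gamma_{V/E}[K_t]$. I expect the main obstacle to be the membership step $E\in S^\cup$ of the first assertion: translating the purely combinatorial twin relation into a union of basic relations is what genuinely uses the coherence of the scheme, via the intersection-number characterisation together with the constancy of valencies on fibers. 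Once this is in hand, homogeneity yields equal class sizes, and the module property and the construction of $\Phi$ are routine verifications.
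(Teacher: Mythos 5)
Your proposal is correct, but for the first assertion it takes a genuinely different route from the paper's. The paper argues directly on basic relations: given a non-reflexive basic relation $r$ with $r\cap E\neq\emptyset$, it fixes a twin pair $(u',v')\in r\cap E$ and, for an arbitrary $(u,v)\in r$ and $w\in uR\setminus\{v\}$ lying in relations $(u,w)\in s$, $(v,w)\in t$, uses the constancy of the intersection number $c^r_{st^*}$ to produce a point $w'\in u's\cap v't^*$; because $u'$ and $v'$ are twins, $t$ meets the edge set and hence lies in it, so $w\in vR$, giving $r\subseteq E$. That argument is self-contained, using nothing beyond the definition of the intersection numbers. You instead characterize twin pairs numerically by $|uR\cap vR|=|uR|-1=|vR|-1$ (your verification of this equivalence is sound, since the graph has no loops) and then combine Lemma~\ref{rk}, which makes $|uR\cap vR|$ constant on every basic relation inside $R$, with the standard coherent-configuration facts that each basic relation sits inside a product of two fibers and that valencies are constant on fibers, so that $|uR|$ and $|vR|$ are also constant along a basic relation. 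Those fiber facts are indeed standard and available in \cite{EP}, but they are never stated or used in this paper, so your route imports more background theory; in exchange it reuses Lemma~\ref{rk} (which the paper proves anyway, though it deploys it only later, for Theorems~\ref{wreath} and~\ref{dihedral}) and avoids the paper's more delicate transfer-of-adjacency computation with $c^r_{st^*}$. Your proof of the second assertion --- equal class sizes from homogeneity, each class inducing a complete graph, all-or-nothing adjacency between distinct classes, and the explicit bijection $\Phi(v)=(X_v,\phi_{X_v}(v))$ --- is essentially the paper's own argument: the paper's isomorphisms $f_X\in\iso(\Gamma_{X_0},\Gamma_X)$ from a fixed class $X_0$ play exactly the role of your bijections $\phi_X$.
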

\begin{proof}
Let $S$ be the set of basic relations of the scheme $\fis(\Gamma)$
and let $R$  be the edge set of $\Gamma$. Then there exists a
subset $S'$ of $S$ such that
\begin{equation}\label{r}
 R=\bigcup_{s\in S'} s.
 \end{equation}
To prove the first statement, we need to check that any
non-reflexive basic relation $r\in S$ such that $r\cap E\neq
\emptyset$ is contained in $E$. To do this, let $(u,v)\in r$. We
claim that  $(u,v)\in E$, or equivalently $u$ and $v$ are twins.

First we show that
\begin{equation}\label{sub}
uR\backslash \{v\} \subseteq vR\backslash \{u\}.
\end{equation}
 If the set
$uR\backslash \{v\}$ is empty, then  \eqref{sub} is clear. Now, we
may assume that there exists an element  $w$ in $V$ such that
$w\in uR\backslash \{v\}$. It is enough to show that $v$ is
adjacent to $w$ in $\Gamma$. By \eqref{r} there exists a basic
relation $s\in S'$ so that $(u,w)\in s$. Denote by $t$ the basic
relation which contains $(v,w)$. It is sufficient to show that
$t\in S'$.

We have $w\in us\cap vt^*$, thus  $|us\cap vt^*|=c^r_{st^*}\neq
0$. Since intersection number does not depend on the choice of
$(u,v)\in r$,  for   $(u',v')\in r$ we have
\begin{equation}\label{crst}
|u's\cap v't^*|=c^r_{st^*}\neq 0.
 \end{equation}
 On the other hand, by the choice of
$r$ there exists $(u',v')\in r\cap E$. So by~\eqref{crst}, there
exists a vertex $w'\in V$ such that
\begin{equation}\label{prim}
w'\in u's\cap v't^*.
\end{equation}
 Moreover, since $s\in S'$, we have $w'\in u'R\backslash \{v'\}$.
 On the other hand, $u'R\backslash \{v'\}
=v'R\backslash \{u'\}$, since $u'$ and $v'$ are twins.  It
follows that $w'\in v'R\backslash \{u'\}$, so from \eqref{prim}
we conclude that $w'$ is adjacent to $v'$ in $\Gamma$ and so  from
\eqref{r} we have $t\in S'$.  The converse inclusion of
\eqref{sub} can be proved in a similar way. Thus $u$ and $v$ are
twins and the first statement  follows.

To prove the second statement, suppose that  $\Gamma$ is a graph
with association scheme. It is well-known fact that all classes of
an equivalence relation of an association scheme have the same
size, say $m$, where $m$ divides $n=|V|$. Moreover, for each
$X\in V/E$ we have $u, v \in X $ if and only if $u$ and $v$ are
twins. Thus for each $X\in V/E$ we have
\begin{equation}\label{twin}
\Gamma_X \simeq K_m.
\end{equation}
 Fix a class
$X_0 \in V/E$.  For each $X\in V/E$ choose an isomorphism  $f_X
\in \iso(\Gamma_{X_0},\Gamma_{X})$. Then the mapping
\begin{equation}\label{iso1}
 f:V \to V/E \times X_0 \\
\end{equation}
$$~~~~~~~v \mapsto (X, f^{-1}_{X}(v)),$$
is a bijection, where $X$ is a class of $E$  containing  $v$. In
order to  complete the proof, we show that the above  bijection is
a required isomorphism:
\begin{equation}\label{iso}
f\in \iso(\Gamma, \Gamma_{V/E}[\Gamma_{X_0}]).
\end{equation}
Take two different vertices  $u$ and $v$ in $V$, then
$f(u)=(X,u_0)$ and $f(v)=(Y, v_0)$, where $X,Y\in V/E$, $u\in X$,
$v\in Y$ and $u_0, v_0 \in X_0$. It is enough to show that $u$
and $v$ are adjacent in $\Gamma$ if and only if $f(u)$ and $f(v)$
are adjacent in $\Gamma_{V/E}[\Gamma_{X_0}]$.

First, we assume that $u$ and $v$ are not twins. Then  $X \neq Y$.
In this case,   by definition of $E$, if  $u$ and $v$ are
adjacent in $\Gamma$ then any vertices in $X$ and any vertices in
$Y$ are adjacent to each other. Also if $X$ and $Y$ be adjacent in
$\Gamma_{V/E}$, by definition of $\Gamma_{V/E}$, there is a
vertex in $X$ and a vertex in $Y$ which are adjacent. However,
since all of the vertices in each class of $V/E$ are twins, then
$u$ and $v$ are adjacent in $\Gamma$. Thus $u$ and $v$ are
adjacent in $\Gamma$ if and only if  $X$ and $Y$ are adjacent in
$\Gamma_{V/E}$. Therefore, $u$ and $v$ are adjacent in $\Gamma$
if and only if  $f(u)$ and $f(v)$ are adjacent in
$\Gamma_{V/E}[\Gamma_{X_0}]$.

 Now, we may assume that   $u$ and $v$ are twins. Then $X=Y$. However, $u$ and $v$ are twins, so they are adjacent
in $\Gamma$. Since, $f_X$ is an isomorphism thus
$f^{-1}_{X}(u)\neq f^{-1}_{X}(v)$, so $u_0\neq v_0$.
From~\eqref{twin}, it follows that   $u_0$ and $v_0$ are adjacent
in $\Gamma_X$.
 Therefore, $f(u)$ and $f(v)$ are adjacent in
$\Gamma_{V/E}[\Gamma_{X_0}]$. Thus   $f$ is an isomorphism  and
\eqref{iso} follows, as desired. \hfill $\Box$
\end{proof}

\medskip
\vspace{5mm}

\medskip
In the next theorem we show that  the scheme of the lexicographic
product of two graphs is smaller than the wreath product of their
schemes. In general, we do not have equality here. For example,
the scheme of the lexicographic product of two complete graphs is
a scheme of rank 2, but the wreath product of their  schemes has
rank 3.

\begin{thrm}\label{wreath}
Let $\Gamma_1$ and $\Gamma_2$ be two graphs. Then the scheme
$\mathcal{X}=\fis(\Gamma_2[\Gamma_1])$  is isomorphic to a fusion
of the scheme $\mathcal{Y}= \fis(\Gamma_1) \wr  \fis(\Gamma_2)$.
 Moreover, if $\Gamma_1$ is a complete graph and
$\Gamma_2$ is a graph   without twins such that its scheme is
association, then $\mathcal{X}$ and $\mathcal{Y}$ are isomorphic.
\end{thrm}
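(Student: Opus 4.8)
The plan is to treat the two assertions in turn: the first amounts to showing that the edge set of $\Gamma_2[\Gamma_1]$ is a relation of $\mathcal{Y}$, and the second to proving the reverse comparison $\mathcal{Y}\leq\mathcal{X}$ under the extra hypotheses. Throughout I identify the point set $V_2\times V_1$ of $\Gamma_2[\Gamma_1]$ with $V_1\times V_2=V_1\otimes V_2$ by swapping coordinates, so that the \emph{fibres} $V_1\times\{w\}$ ($w\in V_2$) are the copies of $\Gamma_1$ and the second coordinate records the outer graph $\Gamma_2$.

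For the first assertion, write $R_i$ for the edge set of $\Gamma_i$ and $S_i$ for the basic relations of $\fis(\Gamma_i)$, and fix $S_i'\subseteq S_i$ with $R_i=\bigcup_{s\in S_i'}s$. Straight from the definition of the lexicographic product, the edge set $R$ of $\Gamma_2[\Gamma_1]$ is the disjoint union of the between-fibre part $V_1^2\otimes R_2=\bigcup_{r\in S_2'}(V_1^2\otimes r)$ and the within-fibre part $\bigcup_{s\in S_1'}(s\otimes 1_{V_2})$; since $R_2$ is loopless, every $r\in S_2'$ is non-reflexive. By the definition of the wreath product each summand is a basic relation of $\mathcal{Y}$, so $R\in\mathcal{Y}^\cup$. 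As $\mathcal{X}=\fis(R)$ is the smallest scheme in which $R$ is a union of basic relations, I conclude $\mathcal{X}\leq\mathcal{Y}$, which is the first assertion.

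Now assume $\Gamma_1=K_m$ and $\Gamma_2$ is twin-free with $\fis(\Gamma_2)$ association; it remains to establish $\mathcal{Y}\leq\mathcal{X}$, i.e. that each basic relation of $\mathcal{Y}$ lies in $\mathcal{X}^\cup$. Since $\Gamma_1$ is complete, $\fis(\Gamma_1)$ is the rank $2$ scheme with $S_1=\{1_{V_1},d_1\}$, where $d_1=V_1^2\setminus 1_{V_1}$; the association hypothesis on $\fis(\Gamma_2)$ makes $1_{V_2}$ basic, so the basic relations of $\mathcal{Y}$ are exactly $1_V$, the relation $d_1\otimes 1_{V_2}$, and the relations $V_1^2\otimes r$ with $r\in S_2\setminus 1_{V_2}$. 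The first step would be to pin down the twins of $\Gamma:=\Gamma_2[K_m]$: a direct neighbourhood computation shows that any two vertices of a common fibre are twins, whereas if $(x_1,u)$ and $(y_1,v)$ with $u\neq v$ were twins then, comparing neighbourhoods fibre by fibre, $u$ and $v$ would be twins of $\Gamma_2$ --- which is excluded. Hence the twin-or-equal relation $E$ of Theorem~\ref{equ} has the fibres as its classes, and by that theorem $E$ is an equivalence relation of $\mathcal{X}$ with $E\in\mathcal{X}^\cup$. In particular $1_V\in\mathcal{X}^\cup$ and $d_1\otimes 1_{V_2}=E\setminus 1_V\in\mathcal{X}^\cup$, which settles the two within-fibre basic relations of $\mathcal{Y}$.

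The main obstacle is the remaining family $V_1^2\otimes r$, $r\in S_2\setminus 1_{V_2}$. Here I would use that the group $K=\prod_{w\in V_2}\sym(V_1\times\{w\})$ of fibre-wise permutations lies in $\aut(\Gamma)$, hence in $\aut(\mathcal{X})$, so every basic relation of $\mathcal{X}$ is $K$-invariant. As $K$ is transitive on $(V_1\times\{w\})\times(V_1\times\{w'\})$ for each ordered pair of distinct fibres, every basic relation of $\mathcal{X}$ disjoint from $E$ is a union of such full fibre-blocks, i.e. has the form $V_1^2\otimes\bar t$ for a non-reflexive basic relation $\bar t$ of the quotient $\mathcal{X}_{V/E}$ (under $V/E\cong V_2$), and these exhaust the basic relations of $\mathcal{X}$ off $E$. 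On the other hand $\Gamma_{V/E}\cong\Gamma_2$, and the between-fibre edges $V_1^2\otimes R_2$, being a union of basic relations of $\mathcal{X}$ off $E$, exhibit $R_2$ as a union of basic relations of $\mathcal{X}_{V/E}$; thus $\fis(\Gamma_2)\leq\mathcal{X}_{V/E}$. Consequently every basic $r$ of $\fis(\Gamma_2)$ is a union $r=\bigcup_j\bar t_j$ of basic relations of $\mathcal{X}_{V/E}$, so $V_1^2\otimes r=\bigcup_j(V_1^2\otimes\bar t_j)\in\mathcal{X}^\cup$. Together with the two within-fibre cases this gives $\mathcal{Y}\leq\mathcal{X}$, and with the first assertion $\mathcal{X}=\mathcal{Y}$. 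I expect the delicate points to be the $K$-invariance argument and the resulting matching between the off-$E$ basic relations of $\mathcal{X}$ and those of $\mathcal{X}_{V/E}$, together with the verification (using twin-freeness of $\Gamma_2$) that the fibres are precisely the twin classes.
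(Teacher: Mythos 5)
Your proposal is correct, and while your first half coincides with the paper's argument (write the edge set of the composition as a union of basic relations of $\mathcal{Y}$ and invoke minimality of $\fis$), your proof of the reverse comparison $\mathcal{Y}\leq\mathcal{X}$ takes a genuinely different route. The paper obtains the within-fibre relation by counting: for adjacent $u,v$ in $\Gamma_2[K_n]$ it computes $|uR\cap vR|$ to be $(n-2)+tn$ inside a fibre and $2(n-1)+t_0n$ across fibres, where $t$ is the valency of $\Gamma_2$ (regularity being forced by the association hypothesis) and $t_0<t-1$ by twin-freeness; Lemma~\ref{rk} then puts this relation in $\mathcal{X}^\cup$, and comparison with $\mathcal{Y}$ makes it basic. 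You shortcut all of this by identifying the fibres with the twin classes of $\Gamma_2[K_m]$ and citing Theorem~\ref{equ}; this is cleaner, uses the twin-free hypothesis exactly where one expects it, and absorbs the empty-$\Gamma_2$ case that the paper must treat separately. For the endgame, the paper quotes (without proof) the minimality property of the wreath product --- it is the smallest scheme admitting an equivalence $F$ whose class restrictions are $\mathcal{X}_1$ and whose quotient is $\mathcal{X}_2$ --- and verifies these two properties for $\mathcal{X}$, the quotient equality coming from minimality of $\fis(\Gamma_2)$ exactly as in your argument. You instead use the fibre-wise symmetric group $K\leq\aut(\Gamma)=\aut(\mathcal{X})$: invariance under $K$ together with its transitivity on cross-fibre blocks forces every basic relation of $\mathcal{X}$ off $E$ to have the block form $V_1^2\otimes\bar t$, after which relations of $\fis(\Gamma_2)$ lift through the quotient. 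The trade-off is which external fact one leans on: the paper's ending needs the universal property of the wreath product, yours needs $\aut(\Gamma)=\aut(\fis(\Gamma))$ --- but the latter is stated in Section~1 of the paper and is already used in the proof of Theorem~\ref{dihedral}, so it is fair game. Both routes are sound; yours is arguably more conceptual, while the paper's counting step is more self-contained, working only with intersection numbers.
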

\begin{proof}
Let $\Gamma_i=(V_i,R_i)$, $\mathcal{X}_i=\fis(\Gamma_i)$ and $S_i$
be the set of basic relations of $\mathcal{X}_i$ for $i=1,2$.
Then  there exists $S'_i\subset S_i$ such that
\begin{equation}\label{ri}
R_i=\bigcup_{s\in S'_i}s.
\end{equation}
Let $\Gamma$ be the lexicographic product of $\Gamma_2$ and
$\Gamma_1$, and let  $R$ be the edge set of  $\Gamma$. Then we
have
$$R=\{((i,k),(j,l)) \in  (V_2\times V_1)^2 :~  (i,j)\in R_2 ~~{\text {or}}~~~  (k,l)\in R_1  ~{\text {with}}~ i=j  \}.$$
Let  $\Gamma'$ be a graph on vertex set $V_1\times V_2$ such that
$(k,i)$ and $(l,j)$ are adjacent in $\Gamma'$ if and only if
$(i,k)$ and $(j,l)$ are adjacent in $\Gamma$. Define
$\sigma:V_2\times V_1 \to V_1\times V_2 $ such that
$(i,k)^{\sigma}=(k,i)$. Then $\sigma \in \iso(\Gamma,\Gamma')$.
Thus $\Gamma$ and $\Gamma'$ are isomorphic and it follows that
\begin{equation}\label{fis}
\fis(\Gamma)^{\sigma}= \fis(\Gamma').
\end{equation}
Let $R'$ be the edge set of  $\Gamma'$, then
$$R'=\{((k,i),(l,j)) \in  (V_1\times V_2)^2 :  (i,j)\in R_2 ~~{\text {or}}~~~ (k,l)\in
R_1 ~{\text {with}}~ i=j \}$$
$$=\{((k,i),(l,j)) \in  (V_1\times V_2)^2 :  (i,j)\in R_2 \}\cup~~~~~~~~~~~~~~~~~~~~~~~~~~~~$$
$$ \{((k,i),(l,j)) \in  (V_1\times V_2)^2 :  (k,l)\in R_1  ~{\text {with}}~ i=j \}.~~~~~~~~~~~~~$$
So, by \eqref{ri} we have
\begin{equation}\label{r'}
R'=\{ (V_1)^2 \otimes s : s\in S'_2\}~\cup ~\{ s \otimes 1_{V_2}
: s \in S'_1\}.
\end{equation}
Therefore, $R'$ is a union of some basic relations of the scheme
$\mathcal{Y}=\mathcal{X}_1\wr \mathcal{X}_2$. Thus we conclude
that
\begin{equation}\label{fis3}
\fis(\Gamma') \leq \mathcal{Y},
\end{equation}
 and from \eqref{fis}  the first statement follows.

To prove the second statement, let $\Gamma_1$ be a complete graph
on $n$ vertices and let $\Gamma_2$ be a  graph without twins such
that $\mathcal{X}_2$ is an  association scheme. Then,
$\mathcal{X}_1\wr \mathcal{X}_2$ is association  and from the
first statement it follows that $\fis(\Gamma')$ is association.
Thus $1_{V_1}\otimes 1_{V_2}$ is a basic relation of
$\fis(\Gamma')$.

 If $\Gamma_2$ be an empty graph, then it is easy to
see that we have equality in \eqref{fis3}. So we may suppose that
$\Gamma_2$ is a non-empty graph. Since   it is a graph with
association scheme,  there exists a positive integer $t$ such
that $\Gamma_2$ is a
 $t$-regular graph.   Let $t_0(i,j)$
be the number of common neighbors of two adjacent vertices $i$ and
$j$ in $\Gamma_2$. Since $\Gamma_2$ is without twins, we have
\begin{equation}\label{d0}
t_0(i,j) < t-1.
\end{equation}
Let  $u=(k,i)$ and $v=(l,j)$ be two adjacent vertices in
$\Gamma'$. Since, for each $i\in V_2$ the graph
$\Gamma_{V_1\times i}$ is isomorphic to $\Gamma_1$, and  for each
two adjacent vertices $i$ and $j$ in $\Gamma_2$ the set
$(V_1\times i)\times (V_1\times j)$ is a subset of $R'$, thus we
have
\begin{equation}\label{delta}
|uR'\cap vR'|=\left\{%
\begin{array}{lll}
     (n-2) +tn,  ~~~~~~~~~~~~~~~~~~~~i=j \\[0.2cm]
     2(n-1)+t_0(i,j)n, ~~~~~~~~~~~ i\neq j. \\[0.1cm]
\end{array}%
\right.
\end{equation}
Using \eqref{d0},  for each $i\neq j$ we have
\begin{equation}\label{delta1}
 2(n-1)+t_0(i,j)n< (n-2) +tn.
\end{equation}
Define
$$
E:=\{(u,v)\in R': |uR'\cap vR'|= (n-2) +tn\}.
$$
From~\eqref{delta} and \eqref{delta1} it  follows that
$$
E=\cup_{i\in V_2}(V_1\times i)^2\backslash ( 1_{V_1}\otimes
1_{V_2}).
$$
Now, from Lemma \ref{rk}, the set $E$  is  a union of some of the
basic relations of $\fis(\Gamma')$. On the other hand, $E=s
\otimes 1_{V_2}$, where $s$ is the non-reflexive basic relation
of the scheme $\mathcal{X}_1$ of rank 2. However, $s \otimes
1_{V_2}$ is a basic relation of the scheme  $\mathcal{X}_1\wr
\mathcal{X}_2$. Therefore, from \eqref{fis3} it is obvious  that
$E$ is a basic relation of $\fis(\Gamma')$.
 Hence,
\begin{equation}\label{f}
F=E\cup( 1_{V_1}\otimes 1_{V_2})
\end{equation}
is an equivalence relation of the scheme $\fis(\Gamma')$.

 The scheme
$\mathcal{X}_1 \wr \mathcal{X}_2$ is the minimal scheme which
contains an equivalence $F$ such that for each class $X\in V/F$,
the scheme $(\mathcal{X}_1 \wr \mathcal{X}_2)_X$ is isomorphic to
$\mathcal{X}_1$ and $(\mathcal{X}_1 \wr \mathcal{X}_2)_{V/F}$ is
isomorphic to $\mathcal{X}_2$. In order  to prove equality in
\eqref{fis3}, it is sufficient to show that $\fis(\Gamma')$ have
the above property.

Let $X\in V/F$. Then by \eqref{f}, the scheme $\fis(\Gamma')_X$ is
isomorphic to the scheme $\mathcal{X}_1$. Moreover, from
\eqref{fis3} it follows that
\begin{equation}\label{ve}
\fis(\Gamma')_{V/F}\leq \mathcal{X}_2.
\end{equation}
However, the edge set of $\Gamma_2$ is a union of some of the
basic relations of $\fis(\Gamma')_{V/F}$. Thus we have equality in
\eqref{ve}, and we are done.
 \hfill $\Box$
\end{proof}
\vspace{5mm}
\begin{remark}\label{subg}
Let $\Gamma$ and $\Gamma'$ be  two graphs  with the same  vertex
set. Suppose that the edge set of $\Gamma'$ is a union of some
basic relations of the scheme of $\Gamma$. Then $\fis(\Gamma')
\leq \fis(\Gamma)$.
\end{remark}

\section{Elementary circular-arc graphs}\label{cmn graph}

 Given integers $n$ and $k$ such that  $0\leq
2k+1< n$, set $C_{n,k}=\cay(\mathbb{Z}_n,S)$ where  $S=\{\pm
1,\ldots, \pm k\}$.  It immediately follows  that $C_{n,k}$ is a
$2k$-regular graph without twins. Note that  $C_{n,0}$ is  an
empty graph  and $C_{n,1}$ is an undirected cycle on $n$ vertices.

From definition, one can verify that $C_{n,k}$ is the graph with
vertex set $V=\mathbb{Z}_{n} $ in which  two vertices $i$ and $j$
are adjacent if and only if
$$
\{i,\ldots, k+i\}\cap \{j,\ldots, k+j\} \neq \emptyset.
$$
Suppose  that $f:V\to A\mathbb{Z}_{n}$, such that
$f(i)=\{i,\ldots,
 k+i\}$. Then the graph  $C_{n,k}$ is the intersection graph of
 the family $\im(f)$. Thus, by Lemma \ref{arcfun} we conclude that  $C_{n,k}$ is a circular-arc graph,
 and  we call it {\it elementary circular-arc graph}.

\begin{example}\label{ex4}
For $n=2k+2$, two different  vertices $i$ and $j$ are adjacent in
$C_{n,k}$ if and only if $j\neq i+k+1$. Thus  $C_{n,k}$ is
isomorphic to a graph on $n$ vertices which is obtained from a
complete graph by removing the edges of a perfect matching. It is
easy to check that the scheme of this graph is isomorphic to
$\mathcal{X}_1\wr \mathcal{X}_2$, where $\mathcal{X}_1$ is a rank
2 scheme on 2 points and $\mathcal{X}_2$ is a rank 2 scheme on
$k+1$ points.
\end{example}
\begin{thrm}\label{regularcag}
A regular circular-arc graph without twins is elementary.
\end{thrm}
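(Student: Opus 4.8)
The plan is to build the reduced arc-function of $\Gamma$ and then argue that its image is forced to be the full family of length-$(k+1)$ consecutive arcs of $\mathbb{Z}_n$, which is exactly the defining family of some elementary graph $C_{n,k}$. First I would dispose of the degenerate case: an empty graph on $n$ vertices is $C_{n,0}$, so I may assume $\Gamma=(V,R)$ is non-empty, hence $m$-regular with $m\geq 1$, on $n=|V|$ vertices, and without twins. By Proposition~\ref{regular}, $\Gamma$ satisfies condition~\eqref{subset1}, which is exactly condition~\eqref{subset}; thus Theorem~\ref{reduced} applies and yields a reduced arc-function $f:V\to A\mathbb{Z}_n$ of length $n$, no arc of $\im(f)$ contained in another, each $i\in\mathbb{Z}_n$ being an end-point of exactly two arcs. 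By Corollary~\ref{correduced} every arc has the same cardinality $\frac{m+2}{2}$; writing $m=2k$, this gives $|f(v)|=k+1$, so each $f(v)=\{i_v,i_v+1,\ldots,i_v+k\}$.

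The crux is that $f$ is injective and that an arc of fixed length is determined by its left end-point. If $f(u)=f(v)$ with $u\neq v$, then $f(u)\cap f(v)\neq\emptyset$ makes $u,v$ adjacent and forces $uR\setminus\{v\}=vR\setminus\{u\}$, so $u$ and $v$ would be twins, contrary to hypothesis. Hence $\im(f)$ consists of $n$ distinct arcs, each of length $k+1$; since such an arc is uniquely recovered from its left end-point $i_v$, the map $v\mapsto i_v$ is an injection from an $n$-element set into the $n$-element set $\mathbb{Z}_n$, hence a bijection. Consequently $\im(f)=\{\{i,i+1,\ldots,i+k\}:i\in\mathbb{Z}_n\}$, which by Section~\ref{cmn graph} is precisely the arc family whose intersection graph is $C_{n,k}$, and $v\mapsto i_v$ is then an isomorphism from $\Gamma$ onto $C_{n,k}$.

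It remains to verify $0\leq 2k+1<n$, so that $C_{n,k}$ is a legitimate elementary circular-arc graph. If instead $2(k+1)>n$, then any two arcs of length $k+1$ in $\mathbb{Z}_n$ meet, so $\Gamma\cong C_{n,k}$ would be the complete graph $K_n$, in which (for $n\geq 2$) every pair of vertices is a pair of twins, contradicting the assumption. Therefore $2k+2=2(k+1)\leq n$, that is $2k+1<n$, which completes the argument.

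The real content is imported, not new: the substantive input is the constancy of arc length supplied by Corollary~\ref{correduced} (itself resting on Corollary~\ref{n(v)} and Theorem~\ref{reduced}), and once that is in hand the combinatorial identification of $\im(f)$ is short. I expect the only genuinely delicate point to be the final inequality $2k+1<n$: it does \emph{not} follow merely from the arcs being proper subsets of $\mathbb{Z}_n$, since membership in $A\mathbb{Z}_n$ only guarantees $k\leq n-2$; one must invoke the absence of twins to rule out the long-arc regime in which $\Gamma$ degenerates into a complete graph.
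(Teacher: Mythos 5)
Your proof is correct and follows essentially the same route as the paper's: Proposition~\ref{regular} plus Theorem~\ref{reduced} to obtain a reduced arc-function, Corollary~\ref{correduced} to get constant arc length $k+1$, and the left-endpoint map $v\mapsto i_v$ as the isomorphism onto $C_{n,k}$. You are in fact somewhat more thorough than the paper, which asserts that $v\mapsto i_v$ is a bijection without justification, does not treat the empty case separately, and never verifies $2k+1<n$; your twin-based injectivity argument and the final inequality check fill those small gaps.
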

\begin{proof}
 Let $\Gamma$ be a circular-arc graph with the vertex set $V$ where $n=|V|$.
 Suppose that it has no twins. Then by
Proposition~\ref{regular} and  Theorem~\ref{reduced}, there exists
a reduced arc-function $f$ of $\Gamma$, such that for each vertex
$v\in V$ we have $f(v)=\{i_v,\ldots, j_v\} $. Define a bijection
from $ V$ to $\mathbb{Z}_n$, the vertex set of $C_{n,k}$, such
that~$v\to i_v$. From Corollary~\ref{n(v)} we conclude that
$\Gamma$ is a $2k$-regular graph. Then $k=|f(v)|-1$ for any
vertex $v$ by Corollary~\ref{correduced}. Hence $j_v= i_v+ k$. By
Lemma~\ref{arcfun}, two vertices $u$ and $v$ in $V$ are adjacent
if and only if $f(u)\cap f(v)\neq \emptyset$. It follows that
$i_u$ and $i_v$ are adjacent if and only if $\{i_u,\ldots,
k+i_u\}\cap \{i_v,\ldots, k+i_v\}\neq \emptyset$. Therefore, the
bijection defined above gives the  required isomorphism.
 \hfill $\Box$
\end{proof}
\begin{thrm}\label{dihedral}
 Let $n$ and $k$ be two  positive integers such that  $2k+2 < n$.
  Then the scheme of the graph $C_{n,k}$
is isomorphic to a dihedral scheme.
\end{thrm}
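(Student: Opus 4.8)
The plan is to show that $\fis(C_{n,k})$ coincides with $\Inv(D_{2n})$, where $D_{2n}$ acts on $\mathbb{Z}_n$ as the rotation--reflection group generated by $x\mapsto x+1$ and $x\mapsto -x$. First I would record the orbitals of this action: writing $r_d=\{(i,i+d):i\in\mathbb{Z}_n\}$ and $t_d=r_d\cup r_{-d}$, the basic relations of $\Inv(D_{2n})$ are $1_V$ together with $t_1,\dots,t_{\lfloor n/2\rfloor}$ (where $t_{n/2}=r_{n/2}$ when $n$ is even). Since the connection set is $S=\{\pm1,\dots,\pm k\}$, the edge set of $C_{n,k}$ is $R=t_1\cup\cdots\cup t_k$, a union of basic relations of $\Inv(D_{2n})$; as $\fis(C_{n,k})$ is the smallest scheme in which $R$ is a union of basic relations, this gives $\fis(C_{n,k})\leq\Inv(D_{2n})$ immediately. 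The whole content of the theorem is therefore the reverse inclusion $\Inv(D_{2n})\leq\fis(C_{n,k})$, i.e.\ that every $t_d$ is a union of basic relations of $\fis(C_{n,k})$.

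The starting point for the reverse inclusion is to isolate $t_1$ using Lemma~\ref{rk}. For two vertices at circular distance $d$ with $1\leq d\leq k$ I would count common neighbours directly on the circle: the neighbourhoods are the arcs $\{-k,\dots,k\}\setminus\{0\}$ and $\{d-k,\dots,d+k\}\setminus\{d\}$, and a short computation gives $|0R\cap dR|=2k-d-1$ provided the two arcs meet only on one side of the circle, while any wrap-around overlap (which can occur when $n\leq 3k$) only lowers the count to the constant value $4k-n$. The hypothesis $2k+2<n$ is exactly what forces $d=1$ to be the \emph{unique} maximiser: its count $2k-2$ strictly exceeds both the values $2k-1-d$ for $d\geq 2$ and the wrap-around value $4k-n$, the latter comparison being equivalent to $n>2k+2$. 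Hence $t_1$ equals the set $\{(u,v)\in R:|uR\cap vR|=2k-2\}$, which by Lemma~\ref{rk} is a union of basic relations of $\fis(C_{n,k})$.

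With $t_1$ available, I would generate the remaining $t_d$ by relational composition, using that the set of relations of a scheme is closed under the product ``$\cdot$'' and under set difference of nested unions of basic relations. The key identity is
\[
t_1\cdot t_m=t_{m-1}\cup t_{m+1}\qquad(t_0:=1_V),
\]
which holds because a difference $\pm1$ followed by a difference $\pm m$ realises exactly the differences $\pm(m-1)$ and $\pm(m+1)$. Arguing by induction on $m$: once $t_1,\dots,t_m$ are known to be relations of $\fis(C_{n,k})$, the relation $t_{m+1}=(t_1\cdot t_m)\setminus t_{m-1}$ is again a union of basic relations. Starting from $t_1\cdot t_1=1_V\cup t_2$ and running $m$ up to $\lfloor n/2\rfloor$ produces every $t_d$ (the top step, and in particular the single relation $r_{n/2}$ when $n$ is even, being handled by the same identity once the coincidence $n/2\equiv -n/2$ is taken into account). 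This yields $\Inv(D_{2n})\leq\fis(C_{n,k})$, and combined with the first inclusion gives $\fis(C_{n,k})=\Inv(D_{2n})$, a dihedral scheme.

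I expect the main obstacle to be the common-neighbour count of the second paragraph: the naive formula $2k-d-1$ is valid only away from wrap-around, and one must verify that wrap-around can never produce a second pair attaining the maximal count $2k-2$ --- this is precisely where the strict inequality $2k+2<n$ (rather than the weaker $2k+1<n$ defining $C_{n,k}$) is indispensable, consistent with the fact that the excluded case $n=2k+2$ of Example~\ref{ex4} has a wreath-product, not dihedral, scheme. The induction of the last paragraph is routine apart from the bookkeeping of the boundary and parity cases near $d=\lfloor n/2\rfloor$.
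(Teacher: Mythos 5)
Your proof is correct, and it shares the paper's pivotal step: isolating the distance-one relation $t_1$ as the set of adjacent pairs with exactly $2k-2$ common neighbours, via Lemma~\ref{rk}, with the hypothesis $2k+2<n$ entering precisely to beat the wrap-around count $4k-n$ (the paper's computation, phrased through the sets $B_i,B_j$, yields the same dichotomy as your arc-overlap formula). After that the two arguments diverge in how each inclusion is finished. For the inclusion $\Inv(D_{2n})\leq\fis(C_{n,k})$, the paper stops once the cycle relation is recognized: it invokes Remark~\ref{subg} to get $\fis(C_n)\leq\fis(C_{n,k})$ and then cites the well-known equality $\fis(C_n)=\Inv(D_{2n})$; you instead reprove that equality from scratch by the composition induction $t_1\cdot t_m=t_{m-1}\cup t_{m+1}$, which makes your argument self-contained (the closure of $\fis(C_{n,k})^\cup$ under $\cdot$ and set difference that you use is immediate from the definition of the intersection numbers). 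For the reverse inclusion, the paper shows the generators $i\mapsto i+1$ and $i\mapsto -i$ of $D_{2n}$ are automorphisms of $C_{n,k}$ and then passes from $\aut(\fis(C_n))\leq\aut(\fis(C_{n,k}))$ to $\fis(C_{n,k})\leq\fis(C_n)$, a step which implicitly relies on $\fis(C_n)$ being Schurian; you avoid this subtlety by observing directly that the edge set $R=t_1\cup\dots\cup t_k$ is a union of $D_{2n}$-orbitals and appealing to the minimality in the definition of $\fis$ --- the same underlying fact ($D_{2n}$-invariance of $R$), but packaged more cleanly. One small imprecision: wrap-around does not ``lower'' the count relative to the naive formula $2k-d-1$ (it can exceed it, e.g.\ $n=9$, $k=3$, $d=3$ gives $3>2$); what your argument actually needs, and correctly uses, is only that $4k-n<2k-2$ exactly when $n>2k+2$.
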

\begin{proof}
 Let $R$ be the
edge set of $C_{n,k}$. For two vertices $i,j\in\mathbb{Z}_n$, we
define $d(i,j)$ be the distance of $i$ and $j$ in the graph
$C_{n,1}$. Suppose that  $i$ and $j$ be two adjacent vertices in
$C_{n,k}$. Then by definition of $C_{n,k}$, we have $d(i,j)\leq
k$. Without loss of generality we may assume that the vertices
$\{i+1,i+2,\ldots, j-1\}$ are between $i$ and $j$. Thus they are
adjacent to both of $i$ and $j$ in the graph $C_{n,k}$. Moreover,
the vertices $\{j-k,j-k+1\ldots, i-1\}$ and $ \{j+1,j+2,\ldots,
i+k\}$ are adjacent to both of $i$ and $j$ too. Thus the latter
three sets are subsets of  $iR\cap jR$, which are of size
$d(i,j)-1$, $k-d(i,j)$ and $k-d(i,j)$ respectively. Moreover,
since $n>2k+2$, they are disjoint. On the other hand,
$B_i:=\{i-k,i-k+1, \ldots , j-k-1 \}$ is the set of all other
vertices which are adjacent to $i$, and $B_j:=\{i+k+1,i+k+2,
\ldots , j+k \}$ is the set of all other vertices which are
adjacent to $j$, (see Fig. 2).

 \vspace{8mm}

\setlength{\unitlength}{0.8cm}
\begin{picture}(6,4)
 \thicklines

 \qbezier(10.9,2.0)(10.9,2.787)(10.3435,3.3435)
  \qbezier(10.3435,3.3435)(9.787,3.9)(9.0,3.9)
  \qbezier(9.0,3.9)(8.213,3.9)(7.6565,3.3435)
  \qbezier(7.6565,3.3435)(7.1,2.787)(7.1,2.0)
  \qbezier(7.1,2.0)(7.1,1.213)(7.6565,0.6565)
  \qbezier(7.6565,0.6565)(8.213,0.1)(9.0,0.1)
  \qbezier(9.0,0.1)(9.787,0.1)(10.3435,0.6565)
  \qbezier(10.3435,0.6565)(10.9,1.213)(10.9,2.0)

 \put(7.7,3.4){\circle*{0.18}}
 \put(7.3,3.6){$i$}

 \put(10.4,3.3){\circle*{0.18}}
 \put(10.6,3.6){$j$}

 \put(7.2,1.4){\circle*{0.18}}
 \put(5.8,1.3){$j-k$}

 \put(7.42,0.9){\circle*{0.18}}
 \put(5,0.6){$j-k-1$}

 \put(8.4,0.2){\circle*{0.18}}
 \put(7.5,-0.3){$j+k$}
 \put(9.7,0.2){\circle*{0.18}}
 \put(9.8,-0.3){$i-k$}

\put(10.8,1.4){\circle*{0.18}}
 \put(11.2,1.3){$i+k$}

\put(10.58,0.9){\circle*{0.18}}
 \put(11,0.6){$i+k+1$}

\qbezier(7.6,1)(8.5,0)(9.6,0.4)
  \put(7.9,0.85){$B_i$}

\qbezier(8.4,0.6)(9.5,0.2)(10.3,1.1)
 \put(9.6,1.1){$B_j$}

     \put(1.5,-1.5){Fig. 2:  Some vertices  of the graph $C_{n,1}$ and the sets $B_i$ and $B_j$}
\end{picture}
\vspace{2cm}

It is clear  that $B_i$ and $B_j$ are disjoint from the above
three subsets. In addition, we have $|B_i|=|B_j|=d(i,j)$.
Moreover, since $n>2k+2$, the vertex $j-k-1$ is not in $B_j$ and
the vertex $i+k+1$ is  not in $B_i$. It follows that $B_i\neq
B_j$, and
\begin{equation}\label{bij}
|B_i\cap B_j|<d(i,j).
\end{equation}
 On the other hand, $B_i\cap B_j\subset iR\cap jR$ and thus
\begin{equation}\label{ir}
~~|iR\cap jR|= (d(i,j)-1)+2(k-d(i,j))+|B_i\cap B_j|
\end{equation}
$$
= 2k-d(i,j)-1+|B_i\cap B_j|.
$$
Now, set
$$
R_{2k-2}:=\{(i,j) \in R : ~~ |iR \cap jR|=2k-2 \}.
$$
Then  $R_{2k-2}$ is  a symmetric relation. Moreover, from
\eqref{ir} we see that
\begin{equation}\label{mod1}
(i,j)\in R_{2k-2} ~\Leftrightarrow~ |B_i\cap B_j|=d(i,j)-1.
\end{equation}
If $d(i,j)=1$, then by \eqref{bij}  we have $|B_i\cap B_j|=0$.
Thus, from \eqref{mod1} we see that
\begin{equation}\label{dij1}
d(i,j)=1 ~\Rightarrow ~(i,j)\in R_{2k-2}.
\end{equation}
If $1<d(i,j)\leq k$, then  $j-k-2\in B_i\backslash B_j$ and
$i+k+2\in B_j\backslash B_i$. It follows that $|B_i\cap
B_j|<d(i,j)-1$. Thus, from~\eqref{mod1} we have $(i,j)\notin
R_{2k-2}$. Then using~\eqref{dij1} we have $(i,j)\in R_{2k-2}$ if
and only if $d(i,j)=1$.

It follows that  the graph $(\mathbb{Z}_n, R_{2k-2})$
  is isomorphic to an undirected cycle on $n$ points, say
$C_n$.  By Remark~\ref{rk}, we conclude that $R_{2k-2}$ is
 union of some basic relations of $\fis(C_{n,k})$. Thus by
Lemma \ref{subg}, we have
$$
\fis(C_n) \leq \fis(C_{n,k}).
$$
It is well-known that  $\fis(C_n)=\Inv(D_{2n})$, where $D_{2n}$ is
a dihedral group on $n$ elements. So, in order  to complete the
proof of the theorem  it is enough to show that
$$
\fis(C_{n,k})\leq \fis(C_n).
$$
 Equivalently, it is
suffices to verify that
$$
\aut(\fis(C_n)) \leq \aut(\fis(C_{n,k})).
$$
 Since the automorphism
group of a graph is equal to  the automorphism group of its
scheme,  it is sufficient to show that $\aut(C_n) \leq
\aut(C_{n,k})$.

Since $\aut(C_n)$ is the dihedral group $D_{2n}$, and $D_{2n}$ is
generated by automorphisms $\sigma$ and $ \delta$ where
$i^{\sigma}=i+1$ and $i^{\delta}=n-i$ for each $i\in
\mathbb{Z}_n$. It is enough to show  that $$\sigma, \delta\in
\aut(C_{n,k}).$$ Let $i,j \in \mathbb{Z}_n$. Two vertices $i$ and
$j$ are adjacent in $C_{n,k}$ if and only if
\begin{equation*}
\{i,\ldots, k+i\}\cap \{j,\ldots, k+j\} \neq \emptyset
\Leftrightarrow
 \end{equation*}
\begin{equation*}
\{i+1,\ldots, k+i+1\}\cap \{j+1,\ldots, k+j+1\} \neq \emptyset
\Leftrightarrow
 \end{equation*}
\begin{equation}\label{sigma}
\{i^{\sigma},\ldots, k+i^{\sigma}\}\cap \{j^{\sigma},\ldots,
k+j^{\sigma}\} \neq \emptyset.
 \end{equation}
Moreover, we have \eqref{sigma} if and only if $i^{\sigma}$ and
$j^{\sigma}$ are adjacent in $C_{n,k}$. Thus  $\sigma \in
\aut(C_{n,k})$. In a similar way we can show that $\delta \in
\aut(C_{n,k})$, this completes the proof. \hfill $\Box$
\end{proof}

\section{Proof of the main theorems}
\subsection*{Proof of Theorem \ref{graphtheorem}.}

We first prove the necessity condition of the theorem. Let
$\Gamma=(V,R)$ be a circular-arc graph with association scheme
and $|V|=n$. Denote by $E$ the equivalence relation on $V$
defined in Theorem~\ref{equ}. Then by this theorem the graph
$\Gamma$ is  isomorphic to lexicographic product of the graph
$\Gamma_{V/E}$ and  a complete graph. In particular, it is easy
to see that $\Gamma_{V/E}$ is also a circular-arc graph and has
no twins. So, to complete the proof it is enough to show that
$\Gamma_{V/E}$ is an elementary circular-arc graph.

 If $\Gamma_{V/E}$ is  empty, then it   is   isomorphic to $C_{m,0}$ with
 $m=|V/E|$, and we are done.
 We suppose that $\Gamma_{V/E}$ is non-empty.
The graph $\Gamma$ is regular, because it is a graph with
association scheme.
 By the definition of $E$ this implies that the
graph $\Gamma_{V/E}$ is regular too.  From Corollary
\ref{correduced}, the graph $\Gamma_{V/E}$ is $2k$-regular  for
some  integer $k> 0$. Therefore, from Theorem \ref{regularcag} the
latter graph is isomorphic to the elementary circular-arc graph
$C_{m,k}$. Thus $\Gamma$ is isomorphic to lexicographic product
of  an elementary circular-arc graph and a complete graph.

Conversely, let $\Gamma_1$ be a complete graph and let $\Gamma_2$
be an elementary circular-arc graph. If $\Gamma_2$ be an empty
graph then it is a graph with  association scheme. If it is a
non-empty elementary circular-arc graph then from Example
\ref{ex4} and Theorem~\ref{dihedral} we conclude that $\Gamma_2$
is a graph with association scheme. On the other hand, the wreath
product of two association scheme is association. Thus, from
 Theorem~\ref{wreath}  the scheme of $\Gamma_2[\Gamma_1]$ is
 association. This completes the proof of the theorem.
 \hfill $\Box$

\subsection*{Proof of Theorem \ref{schemetheorem}.}
We first assume that $\mathcal{X}$ is an association circular-arc
scheme. Then there is a circular-arc graph $\Gamma$ such that
$\fis(\Gamma)=\mathcal{X}$. From Theorem \ref{graphtheorem}, the
graph  $\Gamma$  is isomorphic to the  lexicographic product of an
elementary circular-arc graph and a complete graph. On the other
hand,  if the elementary circular-arc graph is empty then its
scheme is of rank 2. Otherwise, from Example~\ref{ex4} and
Theorem~\ref{dihedral}
 the  scheme of an elementary circular-arc graph
 is isomorphic to the
wreath product of a rank 2 scheme on 2 points and a rank 2
scheme, or    it is isomorphic to a dihedral scheme. Therefore,
in any case the scheme of an elementary circular-arc graph is
association. Note that in the first two cases the scheme of an
elementary circular-arc graph is  forestal. Moreover, any
elementary circular-arc graph is  without twins. Hence, from
Theorem~\ref{wreath} it follows that $\fis(\Gamma)$ is isomorphic
to  the wreath product of a  rank 2 scheme and the scheme of an
elementary circular-arc graph which is either  forestal or
dihedral.

Conversely, assume that $\mathcal{X}$ is a circular-arc scheme
such that it is isomorphic to the wreath product of a rank 2
scheme and a scheme which is either forestal or dihedral. Since
any rank 2 scheme, any  forestal scheme and any dihedral scheme
are association,  it is enough to note that
 the wreath product of two association schemes is an
association scheme.  Thus $\mathcal{X}$ is an association scheme
and  the proof is complete. \hfill $\Box$

\vspace{3mm}

\hspace{-6mm}{\bf{Acknowledgment}}
\vspace{2mm}
\par The authors are extremely grateful  to Professor Ilia Ponomarenko
for his useful discussions and valuable helps. The first author
was visiting the Euler Institute of Mathematics, St. Petersburg,
Russia during the time  this paper was written and she thanks the
Euler Institute for its hospitality.

\end{document}